\theoremstyle{plain}
\newtheorem{satz}{Satz}[]
\newtheorem{theorem}[satz]{Theorem}
\newtheorem{lemma}[]{Lemma}
\newtheorem{problem}[]{Problem}
\newtheorem{corollary}[]{Corollary}
\theoremstyle{remark}
\theoremstyle{definition}
\begin{document}

\title
{Counting cycles in planar triangulations}
\author{
{\sc On-Hei Solomon LO}\footnote{D{\'e}partement d'informatique et de
	recherche op{\'e}rationnelle, Universit{\'e} de
	Montr{\'e}al, Pavillon Andr{\'e}-Aisenstadt, 2920, chemin de la Tour, Montr{\'e}al, Qu{\'e}bec
	H3T 1J4, Canada. E-mail address: ohsolomon.lo@gmail.com} \ and {\sc Carol T. ZAMFIRESCU}\footnote{Department of Applied Mathematics, Computer Science and Statistics, Ghent University, Krijgslaan 281 - S9, 9000 Ghent, Belgium and Department of Mathematics, Babe\c{s}-Bolyai University, Cluj-Napoca, Roumania. E-mail address: czamfirescu@gmail.com}}
\date{}

\maketitle
\begin{center}
\vspace{2mm}
\begin{minipage}{125mm}
{\bf Abstract.} We investigate the minimum number of cycles of specified lengths in planar $n$-vertex triangulations $G$. It is proven that this number is $\Omega(n)$ for any cycle length at most $3 + \max \{ {\rm rad}(G^*), \lceil (\frac{n-3}{2})^{\log_32} \rceil \}$, where ${\rm rad}(G^*)$ denotes the radius of the triangulation's dual, which is at least logarithmic but can be linear in the order of the triangulation. We also show that there exist planar hamiltonian $n$-vertex triangulations containing $O(n)$ many $k$-cycles for any $k \in \{ \lceil n - \sqrt[5]{n} \rceil, \ldots, n \}$. Furthermore, we prove that planar 4-connected $n$-vertex triangulations contain $\Omega(n)$ many $k$-cycles for every $k \in \{ 3, \ldots, n \}$, and that, under certain additional conditions, they contain $\Omega(n^2)$ $k$-cycles for many values of $k$, including~$n$.

\smallskip

{\bf Key words.} Planar triangulation, cycle enumeration.

\smallskip

\textbf{MSC 2020.} 05C10, 05C30, 05C38, 05C45.

\end{minipage}
\end{center}

\bigskip

\bigskip

\section{Introduction}

In this paper, a \emph{triangulation} shall be a plane graph unless explicitly stated otherwise, i.e.\ an embedded planar graph, in which every face is a triangle, but we exclude $K_3$ so that a triangulation shall here always be 3-connected. In a graph $G$, the length of a shortest cycle in $G$ is called its \emph{girth}, and the length of a longest cycle its \emph{circumference}. The latter will be abbreviated to ${\rm circ}(G)$. For a graph $G$, its \emph{cycle spectrum} is the set of all lengths of cycles occurring in $G$. We will here be interested in counting cycles in triangulations. For a graph $G$, a set $X \subset V(G)$ is a \emph{cut} if $G - X$ has more connected components than $G$. A cut on $k$ vertices is a \emph{$k$-cut}.

Hitherto, most results on the enumeration of cycles in $n$-vertex triangulations $G$ focus on the two ends of the cycle spectrum. Historically, the first result which can be interpreted as a statement on counting cycles in triangulations is Euler's formula, which implies that $G$ contains $\Omega(n)$ many 3-cycles. Hakimi and Schmeichel~\cite{HS79} presented tight upper and lower bounds on the number of 3-cycles and 4-cycles as well as those graphs for which the bounds are attained except for a few cases which were later completed by Alameddine~\cite{Al80}. In particular, it follows from~\cite{HS79} that any $n$-vertex triangulation $G$ contains $\Theta(n)$ many 3-cycles as well as $\Omega(n)$ and $O(n^2)$ many 4-cycles.
There exist infinitely many triangulations with a linear number of 3-cycles (4-cycles), for instance 4-connected (5-connected) triangulations: a 3-cycle (4-cycle) in such a graph cannot be separating, so it must be the boundary of a triangular face (the symmetric difference of the boundaries of two triangular faces sharing an edge).

Hakimi and Schmeichel also showed that there must occur $\Omega(n)$ and $O(n^2)$ many 5-cycles. They conjectured a precise upper bound of $2n^2-10n+ 12$ and this was confirmed only very recently~\cite{GPSTZ}. Moreover, they proved that in an $n$-vertex triangulation there are at least $6n$ 5-cycles, and there are infinitely many triangulations attaining this bound. Since in triangulations every inclusion-minimal cut induces a cycle, these results immediately give information on the number of 3-, 4-, and 5-cuts in triangulations. For $k > 5$, Hakimi and Schmeichel write that they ``\emph{have no interesting lower bound}'' for the number of $k$-cycles. This motivates the following question.

\begin{problem} \label{pro:problem1}
	What can be said, asymptotically, about the minimum number of $k$-cycles occurring in a given triangulation for $k > 5$?
\end{problem}

Hakimi and Schmeichel~\cite{HS79} also proved that every triangulation is \emph{weakly pancyclic}, i.e.\ it contains cycles of all lengths between its girth, which is always three, and circumference. It is however \emph{not} true that every $n$-vertex triangulation $G$ contains $\Omega(n)$ many $k$-cycles for every integer $k$ with $3 \le k \le {\rm circ}(G)$: Hakimi, Schmeichel, and Thomassen~\cite{HST79} constructed an infinite family of triangulations with exactly four hamiltonian cycles. And one cannot do better: Kratochvil and Zeps~\cite{KZ88} proved that if a triangulation different from $K_4$ contains a hamiltonian cycle, then it contains at least four of them. They also show that this theorem holds for triangulations of arbitrary surfaces.

On the one hand, the results we just mentioned illustrate that a hamiltonian triangulation may have very few hamiltonian cycles; on the other hand, a triangulation may be far from hamiltonian. In the sixties Moon and Moser~\cite{MM63} proved that there are infinitely many $n$-vertex triangulations that have circumference at most $9n^{\log_3 2}$, and 40 years later Chen and Yu~\cite{CY02} showed that this upper bound is best possible up to the constant factor.

In 1979 Hakimi, Schmeichel, and Thomassen proved that every 4-connected $n$-vertex triangulation contains at least $n/\log_2 n$ hamiltonian cycles. This was improved in 2018 when Brinkmann, Souffriau, and Van Cleemput~\cite{BSV18} showed that there is a linear number of hamiltonian cycles in 4-connected triangulations. For very recent developments concerning the enumeration of hamiltonian cycles in triangulations, we refer the reader to~\cite{AAT20, Lo20, LY21, LQ}. It follows from a classic paper of Tutte~\cite{Tu56} that in 4-connected $n$-vertex triangulations the number of $(n - 1)$-cycles is also at least linear in $n$, and one can infer from papers of Thomas and Yu~\cite{TY94} and Sanders~\cite{Sa96} that the same holds for $(n - 2)$- and $(n - 3)$-cycles. (In fact, \cite{TY94} even yields a quadratic number of $(n - 2)$-cycles; we shall come back to this deep result later on.) This naturally leads to the following question.

\begin{problem} \label{pro:problem2}
	Are $4$-connected $n$-vertex triangulations linearly pancyclic, i.e.\ do they contain $\Omega(n)$ many $k$-cycles for every $k \in \{ 3, \ldots, n \}$?
\end{problem}

Mohar and Shantanam~\cite{MS} have recently proven that in a 4-connected triangulation on $n$ vertices, every edge is contained in cycles of $n - 2$ pairwise distinct lengths. They also showed that there exists $\frac{5}{12} \le \lambda \le \frac23$ such that any edge in any planar 4-connected $n$-vertex graph is contained in cycles of $\lambda(n-2)$ pairwise distinct lengths, and conjectured that $\lambda = \frac23$. The lower bound $\frac{5}{12}(n - 2)$ was improved to $\frac{n}{2} + 1$ in~\cite{Lo22}. 

We mention that many problems in extremal graph theory revolve around finding, for a fixed graph $H$, the \emph{maximum} number of subgraphs isomorphic to $H$ in an $n$-vertex graph in some class ${\cal G}$. For graphs $H$ and $G$ and a surface $\Sigma$, let $C(H,\Sigma,n)$ be the maximum number of copies of $H$ in $G$, where the maximum is taken over all $n$-vertex graphs $G$ that embed in $\Sigma$. A very recent manuscript of Huynh, Joret, and Wood~\cite{HJW} gives a panoramic view of results in this area and determines, for any fixed surface $\Sigma$ and any fixed graph $H$, the asymptotic behaviour of $C(H,\Sigma,n)$ as $n \to \infty$.

Let $G$ be an embedded graph. We denote by $G^*$ the dual of $G$. If $G$ is a plane graph, the \emph{weak dual} of $G$ is obtained from $G^*$ by deleting the vertex corresponding to the unbounded face of $G$. For a vertex $v$ in $G$ let $v^*$ be the face of $G^*$ corresponding to the vertex $v$. For distinct vertices $v$ and $w$ in a graph $G$, we call a path with end-vertex $v$ a \emph{$v$-path}, and a $v$-path with end-vertex $w$ a \emph{$vw$-path}. For a path $P$, its \emph{length} is $|E(P)|$. The \emph{distance} $d_G(v,w)$ between $v$ and $w$ is defined as the length of a shortest $vw$-path in $G$. Whenever $G$ is clear from context, we replace $d_G$ by $d$. For a graph $G$, its \emph{radius} and \emph{diameter} are defined as $${\rm rad}(G) := \min_{v \in V(G)} \max_{w \in V(G)} d(v,w) \qquad {\rm and} \qquad {\rm diam}(G) := \max_{v \in V(G)} \max_{w \in V(G)} d(v,w),$$
respectively. In this paper, a face will always include the facial walk bounding it. For a plane graph $G$, we denote by $F(G)$ its set of faces. For a cycle $C$ in $G$, let ${\rm int}C$ be the set of all vertices inside of $C$ but not on $C$, and put ${\rm Int}C := V(C) \cup {\rm int}C$. The former will be called the \emph{interior} of $C$, the latter the \emph{closed interior} of $C$. The \emph{exterior} ${\rm ext}C$ and \emph{closed exterior} ${\rm Ext}C$ are defined analogously. For any $v \in V(G)$, the \emph{neighbourhood} $N_G(v)$ of $v$ in $G$ is defined to be the set of vertices adjacent to $v$ in $G$. We simply write $N(v)$ if it causes no ambiguity.

In Section~2 we show that the minimum number of $k$-cycles occurring in an $n$-vertex triangulation $G$ is $\Omega(n)$ for any $k \in \{ 3, \ldots, 3 + \max \{ {\rm rad}(G^*), \lceil (\frac{n-3}{2})^{\log_32} \rceil \} \}$. In Section~3, motivated by finding hamiltonian triangulations---a superset of 4-connected triangulations---with few long cycles, it is shown that there exist hamiltonian $n$-vertex triangulations containing $O(n)$ many $k$-cycles for any $k \in \{ \lceil n - \sqrt[5]{n} \rceil, \ldots, n \}$. Moreover, we prove that 4-connected $n$-vertex triangulations contain $\Omega(n)$ many $k$-cycles for any $k \in \{ 3, \ldots, n \}$, and that, under certain additional conditions, they contain $\Omega(n^2)$ $k$-cycles for many values of $k$, including $n$. The article concludes with comments on directions of future research and open problems in Section~4.

\section{Linearly many short cycles}

We here show that an $n$-vertex triangulation contains linearly many cycles of every length that is at most the radius of the triangulation's dual or $O(n^{\log_32})$, whichever is greater.

The following definition will be useful. Let $G$ be a 2-connected plane graph. For a connected subgraph $H$ of $G^*$, put $$H^* = \bigcup \{ v^* : v \in V(H) \},$$ i.e.\ a union of faces. Furthermore, let $\partial H^*$ be the subgraph of $H^*$ defined as
$$(V(H^*), \{ vw \in E(H^*) : vw \text{\;is incident with exactly one face from\;} H^* \})$$
from which all isolated vertices are removed. 

We will make use of the following lemma. A \emph{near triangulation} is a 2-connected plane graph all of whose bounded faces are triangular.

\begin{lemma} \label{lem:path-near-triangulation}
	Let $G$ be a near triangulation on at least four vertices and $D$ be the weak dual of $G$. If at most two bounded triangular faces in $G$ contain two edges of $\partial D^*$ and any bounded triangular face in $G$ that has some edge of $\partial D^*$ has all three of its vertices contained in $\partial D^*$, then $D$ is a path.
\end{lemma}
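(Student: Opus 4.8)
The plan is to analyze the structure of the weak dual $D$ and show it has no vertex of degree $\geq 3$ and no cycle, hence is a path (it is connected since $G$ is a near triangulation on at least four vertices). Throughout I will use the standard correspondence: a bounded triangular face $f$ of $G$ corresponds to a vertex $f^{\ast}$ of $D$, and two such vertices are adjacent in $D$ precisely when the corresponding triangles share an edge. The hypothesis is phrased in terms of $\partial D^{\ast}$, the boundary of the union of all faces of $G$ corresponding to $D$; note that since $D$ is the weak dual, $D^{\ast}$ is the union of \emph{all} bounded faces, so $\partial D^{\ast}$ is essentially the outer boundary cycle of $G$ together with any chords/edges that lie on the outer walk. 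An edge $e$ of $G$ lies in $\partial D^{\ast}$ exactly when $e$ is incident with exactly one bounded face, i.e. $e$ is an edge of the outer face of $G$.

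First I would translate the two hypotheses into statements about triangles and the outer boundary. The condition ``at most two bounded triangular faces contain two edges of $\partial D^{\ast}$'' says: at most two triangles of $G$ have two of their three edges on the outer face. The condition ``any bounded triangular face having an edge of $\partial D^{\ast}$ has all three vertices in $\partial D^{\ast}$'' says: whenever a triangle has an edge on the outer face, its apex vertex also lies on the outer walk. I would first handle the claim that $D$ has maximum degree at most $2$. Suppose some vertex $f^{\ast}$ of $D$ has degree $3$, i.e. triangle $f = xyz$ shares all three of its edges with other bounded triangles. Then none of $xy, yz, zx$ is on the outer face, so $f$ has no edge of $\partial D^{\ast}$; that alone is consistent, so degree $3$ is not immediately excluded this way. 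Instead I expect the right approach is a counting/Euler-type argument: if $D$ is a tree it is a path iff it has no vertex of degree $\ge 3$; and the number of leaves of $D$ equals the number of triangles with at least two outer edges (an ``ear'' triangle $xyz$ with $xy$ and $yz$ on the outer boundary — precisely the configuration counted by the first hypothesis, using the second hypothesis to rule out the degenerate case where the ``apex'' coincides badly). So the first hypothesis bounds the number of leaves of $D$ by $2$.

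Next I would show $D$ is acyclic. Suppose $D$ contains a cycle $Z$. The union of the bounded faces corresponding to the vertices of $\mathrm{Int}\,Z$ (closed interior in $D$) is a closed disk region whose boundary, together with the triangles inside it, would force an interior vertex of $G$ surrounded entirely by triangles none of whose edges touch the outer face; tracking $\partial D^{\ast}$, such a vertex contributes nothing to $\partial D^{\ast}$, but the triangles forming the innermost layer adjacent to the outer boundary of that disk must have edges on $\partial(\text{that disk})$ which is part of $\partial D^{\ast}$ only if it is part of the outer face of $G$ — and here is where the second hypothesis bites: a triangle with an edge on $\partial D^{\ast}$ must have all three vertices on $\partial D^{\ast}$, which is impossible for a triangle buried strictly inside $Z$. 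Making this precise is the main obstacle: I need to argue carefully that a cycle in $D$ genuinely encloses at least one face of $G$ (true, since $G$ has at least four vertices and $D$ is the weak dual so every bounded face of $G$ is a vertex of $D$, and a cycle in $D$ of length $\ge 3$ encloses a nonempty region of the plane that must contain a vertex or face of $G$), and then derive the contradiction with $\partial D^{\ast}$.

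Finally, once $D$ is shown to be connected (clear), acyclic, and of maximum degree at most $2$, it is a path, completing the proof. I would organize the write-up as: (1) reinterpret $\partial D^{\ast}$ as the outer-face edges of $G$; (2) show $\Delta(D)\le 2$ by contradiction, using that a degree-$\ge 3$ vertex of $D$ together with the planar embedding would create an enclosed region contradicting the second hypothesis (or alternatively via the leaf count from the first hypothesis combined with a handshake argument); (3) show $D$ is acyclic by the enclosed-face argument above; (4) conclude. The technically delicate point — and where I would spend the most care — is step (3), ensuring that ``a cycle in $D$ encloses something nontrivial of $G$'' is argued rigorously and that the interaction with $\partial D^{\ast}$ yields the stated contradiction with the hypotheses as written.
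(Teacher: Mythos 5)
Your reduction of the problem is sound in outline: $D$ is connected, $\partial D^*$ is precisely the outer cycle of $G$, a leaf of $D$ corresponds exactly to a bounded face with two edges on the outer cycle, so the first hypothesis bounds the number of leaves by two, and a connected acyclic graph with at most two leaves is a path. The fatal problem is step (3), which you yourself flag as the crux: you never prove acyclicity, and the argument you sketch for it cannot work, because it invokes only the second hypothesis. The second hypothesis constrains only those faces that \emph{have} an edge on $\partial D^*$; a face ``buried strictly inside'' a dual cycle $Z$ has no edge on $\partial D^*$, so the hypothesis is vacuously satisfied for it and yields no contradiction. Concretely, take the hexagon $v_1\cdots v_6$ with chords $v_1v_3, v_3v_5, v_5v_1$ and a hub $u$ joined to $v_1, v_3, v_5$: every bounded face with an edge on the outer cycle is one of the three ears $v_1v_2v_3$, $v_3v_4v_5$, $v_5v_6v_1$, whose vertices all lie on $\partial D^*$, so the second hypothesis holds, yet $u$ is an interior vertex and $D$ contains a triangle. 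Thus acyclicity genuinely requires the interplay of \emph{both} hypotheses (here the first one fails, since there are three ears), and your proposal contains no mechanism for combining them; the leaf count from hypothesis one only becomes useful \emph{after} $D$ is known to be a tree, so steps (2) and (3) cannot be decoupled the way you propose.

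The paper avoids this difficulty entirely by arguing by induction on $|V(G)|$: one locates a bounded face $\Delta=v_1v_2v_3$ with exactly one edge $v_1v_2$ on $\partial D^*$ and with $v_3$ also on $\partial D^*$; since all three vertices of $\Delta$ lie on the outer cycle, $\Delta$ separates the disk into two smaller near triangulations, each inheriting the hypotheses (the bound of two faces with two boundary edges is redistributed using the fact that each piece contributes an end-vertex), and their weak duals are paths by induction, which glue along $\Delta^*$ to a path. If you want to keep your global approach, you would need a separate argument that the two hypotheses jointly force every vertex of $G$ onto the outer cycle, and that argument is essentially as hard as the lemma itself; as written, the proposal has a genuine gap at its central step.
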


\begin{proof}
	We prove the statement by induction on the number of vertices of $G$. If $|V(G)| = 4$, then $G$ has exactly two bounded faces and $D$ is a path of length one.
	
	If $|V(G)| > 4$, then there must be a triangular face $\Delta$ on vertices $v_1, v_2, v_3$ such that $\Delta$ has only one edge $v_1 v_2$ contained in $\partial D^*$ and $v_3$ is in $\partial D^*$ as well. Note that $D - \Delta^*$ has precisely two components $H_1, H_2$. Denote by $G_1$ and $G_2$ the near triangulations induced by $D - H_1$ and $D - H_2$, respectively. We may assume that $G_1$ has at most two bounded triangular faces each containing two edges of $\partial D_1^*$, where $D_1$ is the weak dual of $G_1$. By the induction hypothesis, $D_1$ is a path. Moreover, this implies that $G_2$ has at most two bounded triangular faces each containing two edges of $\partial D_2^*$ as $D_1$ has an end-vertex other than $\Delta^*$,  where $D_2$ denotes the weak dual of $G_2$. Again, by the induction hypothesis, $D_2$ and hence $D$ are paths.
\end{proof}

For our first theorem's proof we also need the following technical framework. Brinkmann, Souffriau, and Van Cleemput~\cite{BSV18} introduced so-called {counting bases} in order to enumerate hamiltonian cycles in planar graphs. Making use of their method, we are able to solve Problem~\ref{pro:problem2} affirmatively; the details will be given in the next section. We recall the main ingredients of this technique which we will then slightly modify so that we can count cycles of a specified length.

Let $G$ be a graph. A \emph{counting base} for $G$ is defined to be a triple $(\mathcal{P}, \{\mathcal{C}_P\}_{P \in \mathcal{P}}, \sigma)$, where $\mathcal{P} \subseteq 2^{E(G)}$, $\mathcal{C}_P$ is a family of cycles in $G$ for each $P \in \mathcal{P}$, and $\sigma: \mathcal{P} \rightarrow \mathcal{P}$ is a function satisfying the following properties.

\begin{enumerate}[label=(\roman*)]
	\item For every $P \in \mathcal{P}$, $\mathcal{C}_P$ is non-empty, and for every $C \in \mathcal{C}_P$, $P \subseteq E(C)$;
	\item for every $P \in \mathcal{P}$, $\sigma(\sigma(P)) = P \not\subseteq \sigma(P)$ and for every $C \in \mathcal{C}_P$, $\sigma(C, P) := (C - P) \cup \sigma(P) \in \mathcal{C}_{\sigma(P)}$ (here we extend the function $\sigma$ with abuse of notation) and $\sigma(\sigma(C, P), \sigma(P)) = C$;
	\item for any distinct $P_1, P_2 \in \mathcal{P}$ and any $C \in \mathcal{C}_{P_1} \cap \mathcal{C}_{P_2}$, $\sigma(C, P_1) \neq \sigma(C, P_2)$;
	\item for any $P_1, P_2 \in \mathcal{P}$ with $P_1 \cap P_2 = \emptyset$ and any $C \in \mathcal{C}_{P_1} \cap \mathcal{C}_{P_2}$, $\sigma(C, P_1) \in \mathcal{C}_{P_2}$.
	\end{enumerate}
	
Condition~(ii) immediately implies that for every $P \in \mathcal{P}$, $P$ is non-empty and for every $C \in \mathcal{C}_P$, $P \not\subseteq E(\sigma(C, P))$ and hence $\sigma(C, P) \notin \mathcal{C}_P$.
	
We will give a lower bound on the number of cycles in $\bigcup_{P \in \mathcal{P}} \mathcal{C}_P$ by relating $|\bigcup_{P \in \mathcal{P}} \mathcal{C}_P|$ to $|\mathcal{P}|$. To this end, we need to factor out the ``overlap'' when counting the number of cycles by considering the following notion. Given a counting base $\mathfrak{C} = (\mathcal{P}, \{\mathcal{C}_P\}_{P \in \mathcal{P}}, \sigma)$, we define for $P \in \mathcal{P}$ and $C \in \mathcal{C}_P$ the \emph{overlap} $$\mathbf{o}_\mathfrak{C} (C, P) := |\{P' \in \mathcal{P} : P \cap P' \neq \emptyset\textrm{ and }C \in \mathcal{C}_{P'}\}|.$$ Furthermore, put
$$\mathbf{O}_\mathfrak{C} := \max_{P \in \mathcal{P},\, C \in \mathcal{C}_P} \mathbf{o}_\mathfrak{C} (C, P).$$
Note that $\mathbf{o}_\mathfrak{C} (C, P) \ge 1$ for any $P \in \mathcal{P}$ and any $C \in \mathcal{C}_P$; in particular, $\mathbf{O}_\mathfrak{C} \ge 1$.
	
The following powerful tool is due to Brinkmann, Souffriau, and Van Cleemput~\cite{BSV18}. We include a proof for completeness' sake.
	
\bigskip
	
\noindent \textbf{Counting Base Lemma} ({\cite[Theorem~1]{BSV18}}).
\emph{Let $G$ be a graph and $\mathfrak{C} = (\mathcal{P}, \{\mathcal{C}_P\}_{P \in \mathcal{P}}, \sigma) \ne \emptyset$ a counting base for $G$. Then $$\left|\bigcup_{P \in \mathcal{P}} \mathcal{C}_P\right| \ge \frac{|\mathcal{P}|}{\mathbf{O}_\mathfrak{C}}.$$}
		
\begin{proof}
For a cycle $C$ in $G$, let $m(C) := | \{ P \in \mathcal{P} : C \in \mathcal{C}_P \}|$. We have
$$\left|\bigcup_{p \in \mathcal{P}} \mathcal{C}_P\right| = \sum_{P \in \mathcal{P}} \sum_{C \in \mathcal{C}_P} \frac{1}{m(C)}.$$
It is left to show that
$$\sum_{C \in \mathcal{C}_P} \frac{1}{m(C)} \ge \frac{1}{\mathbf{O}_\mathfrak{C}}$$
for every $P \in \mathcal{P}$, an inequality we abbreviate by $(\dagger)$.
			
Let $P$ be any element of $\mathcal{P}$, and let $C^0 \in \mathcal{C}_P$. We may assume that $m(C^0) > \mathbf{O}_\mathfrak{C}$. Then there exist $t \ge m(C^0) - \mathbf{O}_\mathfrak{C}$ and $P^1, \dots, P^t \in \mathcal{P}$ such that $C^0 \in \mathcal{C}_{P^i}$ and $P^i \cap P = \emptyset$ for every $i \in \{ 1, \dots, t \}$. Note that $C^0, \sigma(C^0, P^1), \dots, \sigma(C^0, P^t)$ are $t + 1$ pairwise distinct cycles in $\mathcal{C}_P$.
		
We claim that $m(\sigma(C^0, P^i)) \le m(C^0) - 1 + \mathbf{O}_\mathfrak{C}$. By definition,
$$m(\sigma(C^0, P^i)) = |\{ P' \in \mathcal{P} : \sigma(C^0, P^i) \in \mathcal{C}_{P'} \} |.$$
We distinguish between two cases, depending on whether or not $C^0 \in \mathcal{C}_{P'}$ holds. As $\sigma(C^0, P^i) \notin \mathcal{C}_{P^i}$, there are at most $m(C^0) - 1$ many $P'$ satisfying $\sigma(C^0, P^i) \in \mathcal{C}_{P'}$ and $C^0 \in \mathcal{C}_{P'}$. For $P'$ satisfying $\sigma(C^0, P^i) \in \mathcal{C}_{P'}$ and $C^0 \notin \mathcal{C}_{P'}$, by the definition of a counting base, we have that $P' \cap \sigma(P^i) \neq \emptyset$. Therefore, there are at most $\mathbf{o}_\mathfrak{C}(\sigma(C^0, P^i), \sigma(P^i))$ many such $P'$. This justifies the claim.
		
As $\mathbf{O}_\mathfrak{C} \ge 1$, we have
$$\sum_{C \in \mathcal{C}_P} \frac{1}{m(C)} \ge \frac{1}{m(C^0)} + \sum_{i=1}^t \frac{1}{\sigma(C^0, P^i)} \ge \frac{1}{m(C^0)} + \frac{m(C^0) - \mathbf{O}_\mathfrak{C}}{m(C^0) - 1 + \mathbf{O}_\mathfrak{C}} \ge \frac{m(C^0) - \mathbf{O}_\mathfrak{C} + 1}{m(C^0) + \mathbf{O}_\mathfrak{C} - 1}.$$ Since $\frac{m(C^0) - \mathbf{O}_\mathfrak{C} + 1}{m(C^0) + \mathbf{O}_\mathfrak{C} - 1}$ is non-decreasing in $m(C^0)$ and $m(C^0) \ge \mathbf{O}_\mathfrak{C} + 1$, $(\dagger)$ holds.
\end{proof}

We also require the following lemma (more precisely, its second part---its first part will be useful later on). Thereafter, we can state and prove our first main result.

\begin{lemma} \label{lem:weaklypancyclic}
	Let $G$ be a near triangulation, $C$ be the boundary cycle of the unbounded face of $G$, and $v_1 v_2, v_2 v_3, v_3 v_4$ be three consecutive edges in $C$. Then the following hold.
	\begin{enumerate}[label=(\roman*)]
		\item There exists a $k$-cycle in $G$ containing $v_1 v_2 v_3$ for every $k \in I := \{ \min\{ |E(C)|, |N(v_2)| + 1 \}, \ldots, \max\{ |E(C)|, |N(v_2)| + 1 \} \}$.
		\item If $v_1 \neq v_4$ and $v_2 v_3 v_4 v_2$ is a triangular facial boundary, then there exists a $k$-cycle in $G$ containing $v_1 v_2 v_3 v_4$ for every $k \in I$. 
	\end{enumerate}
\end{lemma}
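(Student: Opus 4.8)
I would first reduce (ii) to (i). Under the hypotheses of (ii) the edges $v_2v_3,v_3v_4$ are consecutive on $C$ and $v_2v_3v_4$ bounds a face, so the faces incident with $v_3$ are exactly the unbounded face and $v_2v_3v_4$; hence $\deg_G(v_3)=2$ with $N_G(v_3)=\{v_2,v_4\}$, and every cycle through $v_3$ uses both $v_2v_3$ and $v_3v_4$. For $k\in I$, part (i) gives a $k$-cycle $Z$ containing $v_1v_2v_3$; since $Z$ passes through $v_3$ it also contains $v_3v_4$, hence the path $v_1v_2v_3v_4$, whose vertices are pairwise distinct ($v_1,v_2,v_3$ are consecutive on $C$, $v_4\notin\{v_2,v_3\}$ automatically, and $v_1\neq v_4$ by hypothesis). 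So all the work is in (i).

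For (i), put $d:=|N(v_2)|$ and let $u_0=v_1,u_1,\dots,u_{d-1}=v_3$ be the neighbours of $v_2$ in rotational order, so that the faces at $v_2$ are precisely the triangles $v_2u_iu_{i+1}$, $0\le i\le d-2$. Then $C$ is an $|E(C)|$-cycle through $v_1v_2v_3$ and $v_2u_0u_1\cdots u_{d-1}v_2$ is a $(d+1)$-cycle through $v_1v_2v_3$, realising the two ends of $I$. For the rest, note that $Z\mapsto Z-v_2$ is a bijection between $k$-cycles through the path $v_1v_2v_3$ and $v_1v_3$-paths of length $k-2$ in $G-v_2$. I then induct on $|V(G)|$ (trivial base case $G=K_3$). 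If $v_2$ has no chord of $C$, then $G-v_2$ is a near triangulation whose outer cycle is split by $v_1,v_3$ into arcs of lengths $d-1$ (the fan path $u_0\cdots u_{d-1}$) and $|E(C)|-2$ (the rest of $C$); the auxiliary statement $(\star)$ below then produces $v_1v_3$-paths of every length between $d-1$ and $|E(C)|-2$, inclusive, i.e.\ $k$-cycles through $v_1v_2v_3$ for all $k\in I$. If $v_2$ has a chord $v_2x$ with $x\in V(C)\setminus\{v_1,v_3\}$, this chord splits $G$ into near triangulations $G_1\ni v_3$ and $G_2\ni v_1$, each on fewer vertices, along whose outer cycles $xv_2,v_2v_3$ (respectively $v_1v_2,v_2x$) are consecutive; by the induction hypothesis $G_1$ has a cycle $Z_1$ through $xv_2v_3$ of each length $a$ in its associated interval and $G_2$ has a cycle $Z_2$ through $v_1v_2x$ of each length $b$ in its associated interval, and then $(Z_1-v_2)\cup(Z_2-v_2)$ is a $v_1v_3$-path (the two parts meeting only in $x$) to which adding the edges $v_1v_2,v_2v_3$ gives a cycle through $v_1v_2v_3$ in $G$ of length $a+b-2$. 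Since $\deg_{G_1}(v_2)+\deg_{G_2}(v_2)=d+1$ and $|E(\partial G_1)|+|E(\partial G_2)|=|E(C)|+1$, a short case check (whether $|E(C)|\le d+1$ or not) shows that these lengths, together with $|E(C)|$ realised by $C$, cover all of $I$.

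It remains to prove $(\star)$: \emph{if $H$ is a near triangulation with outer cycle $C_H$ and $a\neq b$ are vertices of $C_H$, then $H$ has an $ab$-path of length $\ell$ for every integer $\ell$ between the lengths of the two $ab$-subpaths of $C_H$, inclusive.} The underlying picture is a discrete intermediate-value argument: the two arcs of $C_H$ are $ab$-paths of the extreme lengths, and one can ``sweep'' one arc across $H$ towards the other, one triangle per step, each step changing the length by exactly $\pm1$, so every length in between occurs. I would make this rigorous by induction on $|E(H)|$ with three cases. If $|V(C_H)|=3$ the two arc lengths are $1$ and $2$ and the edge $ab$ together with the two-edge path through the third vertex suffice. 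If some edge $xy$ of $C_H$ has its inner-triangle apex $z$ in the interior, then $H-xy$ is again a near triangulation (it stays $2$-connected: the only possible cut vertex would be $z$, which is excluded because $C_H-xy$ still joins $x$ to $y$ avoiding $z$) whose outer cycle has the corresponding $ab$-arc lengthened by exactly $1$; the induction hypothesis applied to $H-xy$, together with the untouched $C_H$-arc, yields all required lengths. Otherwise $C_H$ has a chord (every edge's inner apex lies on $C_H$, forcing one), and splitting $H$ along a chord $pq$ reduces matters to the two pieces: using the induction hypothesis in each piece for the relevant endpoint pairs, and extending paths that use $pq$ by $pq$-paths through the other piece, one combines the resulting length intervals additively — exactly as in (i) — to cover the full range.

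The main obstacle is the chord case of $(\star)$: it splinters into several sub-cases according to whether the chord's ends separate $a$ from $b$ or lie on a common $ab$-arc, in each of which one has to choose the paths in the two pieces so that they remain internally disjoint (so that their concatenation is a \emph{simple} $ab$-path) and then verify, by an interval computation analogous to the one in (i), that the combined lengths realise the whole interval $[\ell_1,\ell_2]$. None of this is deep, but it is where essentially all of the bookkeeping lives; the interval arithmetic in the chord case of (i) is of the same routine flavour and likewise needs to be checked in both regimes $|E(C)|\le d+1$ and $|E(C)|>d+1$.
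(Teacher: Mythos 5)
Your reduction of (ii) to (i) via $\deg_G(v_3)=2$ is exactly the paper's (the paper phrases it as ``every cycle containing $v_1v_2v_3$ must contain $v_3v_4$''), but for (i) you take a genuinely different route. The paper does not delete $v_2$ or induct at all: it performs a single sweep in the weak dual $D_0$, repeatedly deleting a dual vertex whose face has an edge on the current boundary and does not contain $v_2$, so that the boundary $\partial D_i^*$ stays a cycle through $v_1v_2v_3$ and its length changes by exactly $\pm 1$ at each step, interpolating from $C$ (length $|E(C)|$) down to the closed star of $v_2$ (length $|N(v_2)|+1$). That one-pass argument avoids your chord decomposition and all internal-disjointness concerns entirely (its cost is that the existence of a removable face at every step is dismissed with ``it is not hard to see''). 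Your version trades this for an induction plus an auxiliary statement $(\star)$ about $ab$-paths between boundary vertices, which is more general and reusable (you in effect reprove a path form of the sweep), at the price of a heavier case analysis.

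The one place where your proposal is genuinely soft is the chord case of $(\star)$ when the chord $pq$ separates $a$ from $b$. There, ``extending paths that use $pq$ by $pq$-paths through the other piece'' and, more generally, concatenating an inductively obtained $ap$-path in $H_1$ with an inductively obtained $pb$-path in $H_2$ can fail to give a simple path, because both pieces contain $q$ and the induction hypothesis gives you no control over whether the chosen paths pass through $q$. This is fixable --- take a boundary arc (which visibly avoids or contains $q$) on one side of the chord and an inductive path on the other, in all four arc/endpoint combinations, and check that the four resulting length intervals tile $[\ell_1,\ell_2]$ --- but that is a real additional idea plus a multi-case interval computation, not mere bookkeeping, and it is not what your sketch literally describes. (Two trivia: in your case B of (i) the identity should read $|E(\partial G_1)|+|E(\partial G_2)|=|E(C)|+2$, not $+1$, though your deferred interval check goes through either way; and your case A silently assumes the fan path and $C-v_2$ are distinct arcs, which fails only when $G=K_3$, your base case.) If you adopt the paper's dual-sweep instead of $(\star)$, all of this disappears.
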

\begin{proof}
	Let $D_0$ be the weak dual of $G$.
	We construct a sequence of $\ell := |V(D_0)| - |N(v_2)| + 1$ subgraphs $D_1, \dots, D_\ell$ of $D_0$ by removing vertices one-by-one from $D_0$ as follows. Suppose we have already constructed $D_i$ for some $i$ with $0 \le i < \ell$. We may take $v \in V(D_i)$ such that $D_i - v$ remains connected and the triangular face $v^*$ does not contain $v_2$ but some edge in $\partial D_i^*$. Define $D_{i + 1} := D_i - v$.
	It is not hard to see that this iterative procedure holds, every $\partial D_i^*$ is a cycle containing $v_1 v_2 v_3$, $\partial D_\ell^*$ is the cycle on $\{v_2\} \cup N(v_2)$, and
	$$\big\vert |E(\partial D_i^*)| - |E(\partial D_{i + 1}^*)| \big\vert = 1$$
	for all $i \in \{ 0, \ldots, \ell - 1 \}$. Therefore, among these $\ell + 1$ cycles $\partial D_i^*$, there are $k$-cycles containing $v_1v_2v_3$ for all $k \in I$. This proves the first statement.
	Moreover, if $v_1 \neq v_4$ and $\{v_2, v_3, v_4\}$ induces a triangular face, then every cycle containing $v_1 v_2 v_3$ must contain $v_3 v_4$ as well. This thus proves the second statement.
\end{proof}

\begin{theorem} \label{thm:linearlyshortcycles}
	Let $G$ be an $n$-vertex triangulation. For every integer $k$ with $3 \le k \le 3 + \max \{ {\rm rad}(G^*), \lceil (\frac{n-3}{2})^{\log_32} \rceil \}$ there are $\Omega(n)$ many $k$-cycles in $G$. 
\end{theorem}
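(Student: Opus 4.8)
The plan is to split into two regimes according to which term of the maximum dominates, and in each regime exhibit $\Omega(n)$ pairwise distinct $k$-cycles directly, rather than via a counting base.

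First consider the regime $k \le 3 + \lceil (\frac{n-3}{2})^{\log_3 2}\rceil$. The Moon--Moser-type bound tells us that the relevant length scale here is the minimum possible circumference of a near triangulation on a given number of vertices; concretely, a near triangulation whose outer cycle has length $m$ contains vertices whose ``eye'' (the fan around a suitable outer vertex, as in Lemma~\ref{lem:weaklypancyclic}) has small degree, and iterating the contraction of such regions shows that a near triangulation on $N$ vertices has an outer face of length $\Omega(N^{\log_3 2})$ only in the worst case. I would fix a vertex $v$ of $G$ and look at the near triangulation $G_v$ obtained by deleting $v$; its outer cycle is the link of $v$, a cycle on $\deg(v)$ vertices. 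Applying Lemma~\ref{lem:weaklypancyclic}(i) to three consecutive edges of this outer cycle produces, for a suitable choice of the middle vertex $u$ on the link, $k$-cycles through $u$ for every $k$ in an interval whose endpoints are $\deg(v)$ and $\deg_{G_v}(u)+1$; feeding $v$ back in (via the two link-edges at $u$) upgrades these to $k$-cycles of $G$ in an interval of lengths that always includes the small values up to $3 + O(n^{\log_3 2})$. To get $\Omega(n)$ many cycles of a \emph{single} length $k$, I would run this over a linear-sized family of vertices $v$ that are ``far apart'' (an independent-ish set, or more carefully a set chosen so the resulting cycles provably differ), using the fact that the cycles produced at $v$ all pass through $N[v]$ while staying inside a bounded-radius ball, so cycles anchored at distant $v$'s are distinct. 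The bookkeeping that different anchors yield distinct cycles is the first place where care is needed.

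Next consider the regime $3 \le k \le 3 + {\rm rad}(G^*)$. Here I would use the dual radius directly: pick a face $f_0$ of $G$ that is a center of $G^*$, so every face is within dual-distance ${\rm rad}(G^*)$ of $f_0$, and for each $r \le {\rm rad}(G^*)$ consider the set of faces at dual-distance $\le r$ from $f_0$. Its union is a near triangulation $G_r$ (connected, simply connected by planarity), and $\partial G_r^*$ is its outer cycle. As $r$ grows by one the number of faces grows, and I want to say the outer boundary length changes in a controlled way; more usefully, Lemma~\ref{lem:path-near-triangulation} and the analysis in Lemma~\ref{lem:weaklypancyclic} let me, for each fixed target length $k \le 3 + r$, peel faces off $G_r$ one at a time keeping the dual connected, changing the outer cycle length by exactly one at each step, until the outer cycle has length $k$ --- this realises a $k$-cycle of $G$. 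The point of using a BFS-ball rather than an arbitrary near triangulation is that the ball of radius $r$ around $f_0$ and the ball of radius $r$ around a dual-distant face $f_1$ are ``edge-disjoint'' in their outer parts once the dual distance between $f_0$ and $f_1$ exceeds $2r+2$ or so, which again separates the resulting cycles. Choosing $\Omega(n)$ centers of faces that are pairwise dual-far (possible precisely because we may assume ${\rm rad}(G^*)$ is small, else we are in the first regime, so a greedy packing of dual-balls of radius $\sim {\rm rad}(G^*)$ yields linearly many) gives $\Omega(n)$ distinct $k$-cycles.

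The main obstacle, in both regimes, is proving distinctness of the $\Omega(n)$ cycles cleanly: a cycle anchored at $v$ need not be confined to a small ball around $v$ in general, since the near triangulations $G_v$ (or $G_r$) can have large outer faces in directions away from the anchor. I expect the fix is to not demand that the whole cycle be local, but to track a \emph{local certificate} --- e.g. the edge set of the cycle restricted to $N[v]$, or the fact that the cycle uses exactly the two link-edges at the chosen vertex $u$ on $v$'s link while avoiding $v$'s other link-edges --- and to argue that this certificate is determined by $v$ and cannot be produced by the construction at a far-away anchor. Making this certificate argument uniform over $k$ (so that the same linear-sized anchor family works for every admissible $k$ simultaneously, or at least for each $k$ separately with a $k$-independent constant) is the technical heart of the proof; everything else is the interval arithmetic already packaged in Lemmas~\ref{lem:path-near-triangulation} and~\ref{lem:weaklypancyclic}.
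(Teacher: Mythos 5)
There are genuine gaps in both regimes, and in each case the missing piece is the actual engine of the paper's proof.

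In the first regime ($k$ up to $3+\lceil(\frac{n-3}{2})^{\log_32}\rceil$), your construction around a single vertex $v$ cannot produce cycles of the required lengths. Deleting $v$ and applying Lemma~\ref{lem:weaklypancyclic}(i) to the link of $v$ yields cycles whose lengths interpolate between $|E(C)|=\deg(v)$ and $\deg_{G_v}(u)+1$; both endpoints of this interval are bounded by vertex degrees, so nothing here certifies the existence of even a single cycle of length $\Theta(n^{\log_32})$. The claim that ``feeding $v$ back in'' upgrades these to all lengths up to $3+O(n^{\log_32})$ has no support: Lemma~\ref{lem:weaklypancyclic} only shrinks or grows between the outer cycle and a vertex link, so to reach length $n^{\log_32}$ you must first exhibit a cycle at least that long through your anchor. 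That is exactly what the paper imports from Sheppardson and Yu \cite[Lemma~2.3]{SY02}: for a good edge $uv$, the graph $G-u-v$ is a circuit graph and therefore contains a $w_1w_2$-path of length at least $(\frac{n-3}{2})^{\log_32}$, which closes up through the zigzag-path $w_1uvw_2$ and is then shrunk to length $k$ by Lemma~\ref{lem:weaklypancyclic}(ii). Without some such long-path theorem for $3$-connected planar graphs your regime-one argument cannot get off the ground. Distinctness is then handled not by spatial separation but by the Counting Base Lemma applied to the $\Omega(n)$ zigzag-paths with good internal edge (overlap at most $5$); your ``local certificate'' sketch is in that spirit but is left entirely unexecuted.

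In the second regime ($k$ up to $3+{\rm rad}(G^*)$), the packing argument is backwards. Because the theorem takes the maximum of the two bounds, this regime is needed precisely when ${\rm rad}(G^*)$ is \emph{large} --- it can be $\Theta(n)$ (double wheels) --- and then one cannot choose $\Omega(n)$ faces pairwise at dual distance more than $2r$: a dual ball of radius $r$ contains at least $r+1$ of the $2n-4$ faces, so a greedy packing yields only $O(n/r)$ centers. (There is also the secondary issue that a dual BFS ball need not bound a single cycle.) The paper avoids disjointness altogether: it anchors at \emph{every} face $v^*$, grows from it an induced dual path of length $r$ whose face-union is bounded by a cycle (this is where the peeling procedure and Lemma~\ref{lem:path-near-triangulation} are used), obtains one cycle of each length $4,\dots,r+3$ per anchor, and then argues that any produced cycle has empty interior or exterior and meets the anchor face in exactly two edges, so it is counted by at most two anchors --- giving $n-2$ distinct $k$-cycles even when all these paths overlap heavily. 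Replacing your disjoint-ball bookkeeping by this two-ended-path certificate is not a refinement but the essential idea.
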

\begin{proof}
	We first show that for every integer $k$ with $3 \le k \le 3 + {\rm rad}(G^*)$ there are $\Omega(n)$ many $k$-cycles in $G$. Let $v$ and $w$ be two vertices in $G^*$. We claim that there is a $v w$-path $P$ in $G^*$ such that $P$ is an induced path in $G^*$ and $\partial P^*$ is a cycle containing two edges of $v^*$ and two edges of $w^*$. Assume that the unbounded face of $G$ is neither $v^*$ nor $w^*$, and denote by $D_0$ the weak dual of $G$. We construct a sequence of subgraphs $D_1, D_2, \ldots$ of $D_0$ by successively removing, one-by-one, vertices from $D_0$ as follows. If there is a vertex $u$ in $D_i$ that is neither $v$ nor $w$ such that either $\partial D_i^*$ contains two edges of $u^*$, or $\partial D_i^*$ contains one edge of $u^*$ but not all vertices of $u^*$, then we obtain $D_{i + 1}$ by deleting $u$ from $D_i$. It is clear that $D_i$ is always a connected induced subgraph of $D_0$ containing $v, w$ and $\partial D_i^*$ is a cycle for all $i$.

Let $D_\ell$ be the graph we obtain when no further vertex can be removed. Then there is no vertex $v_0 \in V(G)$ lying in the interior of $\partial D_\ell^*$, as otherwise there would exist pairwise distinct vertices $v_1, v_2, v_3$ in $\partial D_\ell^*$ which can be reached from $v_0$ by a $v_0 v_1$-path, a $v_0 v_2$-path, and a $v_0 v_3$-path, all three of which are pairwise internally disjoint. It is clear that in each of the three regions of the closed interior of $\partial D_\ell^*$ formed by these three paths, there is a bounded triangular face which either has two edges of $\partial D_\ell^*$ or has an edge of $\partial D_\ell^*$ and a vertex not in $\partial D_\ell^*$, yielding a contradiction, as we should then remove the corresponding vertex of one such face that is neither $v$ nor $w$ from $D_\ell$. Therefore, we can apply Lemma~\ref{lem:path-near-triangulation} to deduce that $D_\ell$ is a path. It is also obvious that $v$ and $w$ are the end-vertices of $D_\ell$. This implies that $\partial D_\ell^*$ is a cycle containing two edges of $v^*$ and two edges of $w^*$.
	
	Let $v$ be any vertex in $G^*$. By the claim above, we can find a $v$-path $P$ in $G^*$ of length $r := {\rm rad}(G^*)$ such that $P$ is an induced path and $\partial P^*$ is a cycle.
	We denote the vertices of $P$ by $v, w_1, \ldots, w_r$. Let $P_i \subseteq P$ be the $v w_i$-path for all $i \in \{ 1, \ldots, r \}$. As $\partial P^*$ is a cycle, each $\partial P_i^*$ is a cycle. The set ${\cal C} := \{ \partial P_i^* \}_{i=1}^r$ contains exactly one $k$-cycle for each $k \in \{ 4, \ldots, r+3\}$, and there exist edges $e$ and $f$ in $v^*$ such that the edge-set of the intersection of any cycle in ${\cal C}$ with $v^*$ is $\{ e, f \}$. We call this property~$(\dagger)$.
	
	We repeat the above procedure for every vertex in $G^*$, of which there are $2n - 4$ by Euler's formula. In this fashion, due to $(\dagger)$, any given cycle was counted at most twice. Hence, there exist at least $n - 2$ $k$-cycles for every $k$ with $3 \le k \le r + 3$. We remark that the cycles found in the aforementioned proof have an empty interior or exterior. We have completed the first part of the proof.

\smallskip

We now prove that for every integer $k$ with $3 \le k \le 3 + \lceil (\frac{n-3}{2})^{\log_32} \rceil$ there are $\Omega(n)$ many $k$-cycles in $G$. For $k \in \{ 3, 4, 5 \}$ a linear lower bound on the number of $k$-cycles follows from Euler's formula and results of Hakimi and Schmeichel~\cite{HS79}, as mentioned in the Introduction. The case $k = 6$ can be inferred from the first part of this proof. So we may assume $7 \le k \le \lceil (\frac{n-3}{2})^{\log_32} \rceil + 3$. An edge $uv \in E(G)$ is said to be \emph{good} if $u$ and $v$ have precisely two common neighbours $w_1$ and $w_2$, and $u$ or $v$ has degree at most 6. We also call $w_1 u v w_2$ and $w_1 v u w_2$ \emph{zigzag-paths} with the \emph{internal edge} $uv$, and we identify a zigzag-path with its edge set. We define $\mathcal{P}$ to be the set of zigzag-paths having a good internal edge.

Let $v \in V(G)$. Observe that $N_G(v)$ induces an outerplanar subgraph $H_v$ of $G$, and hence $H_v$ has at least two vertices $v_1, v_2$ of degree 2. It is not hard to see that $v v_1$ and $v v_2$ are good edges provided that $v$ has degree at most 6 in $G$. Together with the fact that $G$ has average degree less than 6, we conclude that $|\mathcal{P}| \in \Omega(n)$.

For any $P \in \mathcal{P}$, we define $\mathcal{C}^{k}_P$ to be the family of $k$-cycles $C$ with $E(C) \supset P$ and $\sigma(P) := w_1 v u w_2$ when $P = w_1 u v w_2$. We claim that $\mathfrak{C}_k = (\mathcal{P}, \{\mathcal{C}^{k}_P\}_{P \in \mathcal{P}}, \sigma)$ is a counting base. It suffices to show that $\mathcal{C}^{k}_P \neq \emptyset$ for every $P \in \mathcal{P}$.
Let $P = w_1 u v w_2$ be an element of $\mathcal{P}$. We may assume that $v$ has degree at most 6.

Recall that a \emph{circuit graph} is a pair $(G_0, C_0)$, where $G_0$ is a 2-connected plane graph and $C_0$ is a facial cycle of $G_0$, such that for any 2-cut $S$ of $G_0$, every component of $G_0 - S$ contains a vertex of $C$.

Now, we take $G_0 := G - u - v$. Since $uv$ is a good edge, $u, v$ lie in a face which is bounded by a cycle, denoted by $C_0$, containing $w_1, w_2$. Therefore $(G_0, C_0)$ is a circuit graph. By~\cite[Lemma~2.3]{SY02} there is a $w_1w_2$-path in $G_0$ of length at least $(\frac{n - 3}{2})^{\log_3 2}$, which can be extended to a cycle $C$ in $G$ containing $P$ of length at least $(\frac{n - 3}{2})^{\log_3 2} + 3$. As $P$ is a zigzag-path, without loss of generality, we may assume that there is precisely one face incident with $v$ lying in the exterior of $C$. Therefore, there are at most five faces incident with $v$ lying in the interior of $C$.
It follows from Lemma~\ref{lem:weaklypancyclic}(ii) that there is a $k$-cycle containing $P$ as $k \ge 7 \ge |N_G(v)| + 1$.
This assures that $\mathcal{C}^{k}_P \neq \emptyset$.

As $|\mathcal{P}| \in \Omega(n)$ and $\mathbf{O}_{\mathfrak{C}_k} \le 5$, applying the Counting Base Lemma to the counting base $\mathfrak{C}_k$ yields that there are $\Omega(n)$ $k$-cycles.
\end{proof}

For a given $n$-vertex triangulation $G$, the radius of $G^*$ is at least logarithmic in $n$, but it can be linear: consider double wheels (i.e.\ the join of $\overline{K_2}$ and a cycle), whose duals are prisms.

It follows from a series of papers that planar 4-connected graphs on $n$ vertices contain a $k$-cycle for every $k \in \{ n - 7, \ldots, n \}$; for details, see~\cite{CHW09}. Taking the proof technique of a work by Alahmadi, Aldred, and Thomassen~\cite{AAT20} into account, it is straightforward to infer the presence of an exponential number of $k$-cycles in 5-connected triangulations on $n$ vertices for every $k \in \{ n - 7, \ldots, n \}$. It remains an open question whether for every 5-connected triangulation $G$ there exists a subset $S$ of the cycle spectrum of $G$ such that $|S| \in \omega(1)$ and $G$ contains exponentially many $k$-cycles for every $k \in S$.

\section{Linearly many long cycles}

As we have mentioned in the introduction, there are infinitely many $n$-vertex triangulations with no $k$-cycle for all $k > 9n^{\log_3 2}$, see~\cite{MM63}. So in the family of all triangulations the minimum number of hamiltonian cycles---and indeed the minimum number of long cycles---is simply 0. However, it is a classic problem to investigate how few hamiltonian cycles may occur if we impose the presence of at least one hamiltonian cycle. (We point out that, from a more general perspective, Alahmadi, Aldred, and Thomassen~\cite{AAT20} observed that a 4-connected hamiltonian triangulation on any fixed surface may have at most $O(n^4)$ cycles.) In 1979, Hakimi, Schmeichel, and Thomassen showed that there is an infinite family of triangulations with exactly four hamiltonian cycles~\cite{HST79}, and this is a lower bound for the minimum number of hamiltonian cycles in any hamiltonian triangulation on at least 5 vertices~\cite{KZ88}. This motivates the following result; therein, we will make use of the function $g : \mathbb{N} \rightarrow \mathbb{N}$, $t \mapsto \left\lceil \frac{(t-3)(t-4)}{12} \right\rceil \hspace{-0.5mm}$ and, in contrast to all other parts of this paper, not restrict ourselves to the spherical case.

\begin{theorem}
	Consider an integer $t \ge 4$ with $t = 0, 3, 4, 7$ mod.~$12$, and let $f : \mathbb{N} \rightarrow \mathbb{R}_{\ge 1}$ be a function such that there exists an $n_0$ such that for all $n \ge n_0$ we have $f(n) < \frac{n}{t} - 1$. Then there exists an infinite family ${\cal G}$ of hamiltonian triangulations of genus $g(t)$ such that every $G \in {\cal G}$ with $n := |V(G)|$ contains $O(f(n)^{2t-3})$ $k$-cycles for any $k \in \{ \lceil n - f(n) \rceil, \ldots, n \}$.
\end{theorem}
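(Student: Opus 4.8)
The argument would rest on three ingredients: realising the prescribed genus through a complete-graph triangulation, a rigid ``backbone'' that forces long cycles to be almost canonical, and a bounded-size ``flexible'' piece that produces the polynomial count. For the surface: since $t \equiv 0,3,4,7 \pmod{12}$, the Ringel--Youngs theorem provides a triangular embedding of $K_t$ in the orientable surface $\Sigma$ of genus $(t-3)(t-4)/12 = g(t)$. Deleting the interior of one face yields a near-triangulation $B_t$ on $t$ vertices whose boundary is a triangle; this will be the \emph{only} non-planar piece of the construction. Every other piece will be a planar near-triangulation with triangular boundary, glued to its neighbours along boundary triangles, so that each $G \in {\cal G}$ has genus exactly $g(t)$, since capping a triangular hole with a planar near-triangulation does not change the genus.

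For each large $m$ I would assemble $G_m$ from a long chain $R_1, \dots, R_m$ of identical planar \emph{rigid blocks} glued consecutively along triangular faces, with the single copy of $B_t$ inserted near one end and the two ends closed off by caps, arranging $n := |V(G_m)| \asymp m$. Each rigid block is chosen, in the spirit of the Kleetope constructions and of Hakimi--Schmeichel--Thomassen~\cite{HST79}, to be almost $6$-regular --- recall that a degree-$3$ vertex of a triangulation can be bypassed by a local rerouting, so a long cycle can avoid it and too many degree-$3$ vertices would already force too many $(n-1)$-cycles --- and to contain a triple of pairwise non-adjacent vertices with heavily overlapping links. An independence-type counting argument then shows that a cycle of length at least $n - f(n)$ must visit \emph{all} vertices of all but $O(f(n))$ of the blocks, and that in a fully visited block such a cycle realises exactly one ``transit pattern''. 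Hence every such cycle coincides with a fixed canonical Hamilton cycle $H_m$ of $G_m$ outside an $O(f(n))$-sized region; in particular $G_m$ is hamiltonian.

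For the count: the $O(f(n))$-sized flexible region will be built from a triangulated $t$-gon --- which has exactly $2t - 3$ edges --- by thickening each of its $2t-3$ edges into a short near-triangulated strip whose length is on the order of $f(n)/(2t-3)$. A cycle of length at least $n - f(n)$ is then pinned down by its canonical behaviour on the rigid part together with, on the flexible region, a choice of $2t-3$ non-negative integers (how far the cycle reaches into each thickened strip) of total size at most $f(n)$, plus $O(1)$ discrete data; this yields $O(f(n)^{2t-3})$ cycles of each length $k \in \{\lceil n - f(n)\rceil, \dots, n\}$. The hypothesis $f(n) < \frac{n}{t} - 1$ is exactly what is needed for the flexible region to fit inside $G_m$ and the rigid chain to remain long enough for the forcing. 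Equivalently one may repackage the bound through the identity $\#\{(n-i)\text{-cycles of }G_m\} = \sum_{S \subseteq V(G_m),\, |S| = i} \#\{\text{Hamilton cycles of } G_m - S\}$, valid because a cycle of length $n-i$ determines the set of vertices it misses, together with the fact that $G_m - S$ is non-hamiltonian unless $S$ is contained in the flexible region.

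The main obstacle is the forcing argument of the second step for \emph{non-spanning} cycles. A cycle of length $n - f(n)$ carries a budget of $f(n)$ vertices that it may omit, and the danger is that it spends this budget by omitting a few vertices in each of many different rigid blocks --- which would inflate the count to order $\binom{m}{f(n)}$ rather than $f(n)^{2t-3}$. The rigid block must therefore be engineered so that omitting even a single one of its vertices destroys its unique transit pattern entirely, so that the cycle cannot re-enter the chain there; this forces every omitted vertex into the single flexible gadget. Making this robust while simultaneously keeping $G_m$ hamiltonian, keeping the genus exactly $g(t)$, and getting the polynomial degree to be precisely $2t-3$ rather than something larger --- together with a careful treatment of the two caps and of the interface between $B_t$ and the chain so that these introduce no further long cycles --- is where the real work lies.
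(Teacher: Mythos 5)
Your plan diverges substantially from the paper's construction, and it leaves its central step unproven. The paper does not use a long chain of rigid blocks with a single $O(f(n))$-sized flexible gadget at all: it takes the Ringel--Youngs triangular embedding of $K_t$ and inserts a \emph{flipped double wheel} on roughly $n/t$ vertices into each of $t$ chosen faces (the $t-1$ faces around one vertex $v$, all sharing the apex $v$, plus one further face). Since every long cycle can pass between gadgets only through the $t$ original vertices, its restriction to each gadget is a path between two corners; such a traversal omitting $q_i+1$ vertices admits $O(q_i)$ continuations if it is ``flexible'' and $O(1)$ if it is ``fixed'', and a short argument at the shared apex shows at most $t-2$ traversals can be flexible. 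The bound $O(q^{2t-3})$ then falls out as $\binom{q+t-1}{t-1}\cdot O(q^{t-2})$, i.e.\ $(t-1)+(t-2)=2t-3$, with a construction that is independent of $f$. Your exponent $2t-3$ instead arises from the $2t-3$ edges of a triangulated $t$-gon, which is a coincidence of arithmetic rather than the same mechanism.

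The genuine gap is the one you yourself flag: the existence of a ``rigid block'' such that (a) the full block admits exactly one transit pattern for any near-spanning cycle, (b) omitting \emph{any single vertex} of the block destroys all transit patterns, so that a cycle of length $\ge n-f(n)$ is forced to concentrate its entire omission budget in the one flexible gadget, and (c) the whole graph nonetheless remains hamiltonian. No such gadget is exhibited, and without it the count degenerates to something like $\binom{m}{f(n)}$ exactly as you fear; the identity $\#\{(n-i)\text{-cycles}\}=\sum_{|S|=i}\#\{\text{Hamilton cycles of }G_m-S\}$ is only useful once one proves that $G_m-S$ is non-hamiltonian for all $S$ meeting the rigid part, which is the entire difficulty. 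Requirements (a)--(c) are in tension (a triangulation with a Hamilton cycle through a block typically admits local reroutings that survive single-vertex deletions), and resolving that tension is not a routine engineering matter. A secondary unverified point is your claimed per-strip count: you assert the cycle's behaviour in each thickened strip is captured by a single integer plus $O(1)$ data, which is precisely the flexible/fixed dichotomy the paper proves for flipped double wheels but which you do not establish for your strips. As it stands the proposal is a programme whose hardest component is missing, whereas the paper's route avoids the forcing problem entirely by making \emph{every} gadget large and controlling the per-gadget traversal count instead of trying to forbid omissions outside a small region.
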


\begin{proof}
Let $W$ be the join of $\overline{K_2}$ and a cycle $D$, i.e.\ a double wheel. Let $vw \in E(D)$. $vw$ lies in the boundary of exactly two facial triangles, $avw$ and $bvw$. We call $W - vw + ab$ a \emph{flipped double wheel}, and $w$ its \emph{apex}.

By the Map Colour Theorem of Ringel and Youngs~\cite{RY68} we can embed $K_t$ on a surface of genus $g(t)$. Consider $G \cong K_t$; it is well-known that for the given values of $t$, the embedding is a triangulation. We first treat the case when $t = 4$. Consider the infinite family of triangulations given in Figure~\ref{fig:few-long}. It consists of a complete graph on four vertices $a, b, c, d$ with a flipped double wheel inserted into each of its four triangles. We call these triangles as well as the vertices $a,b,c,d$ \emph{original}. We insert the same number $p$ of vertices into each original triangle. We denote this graph by $G_p$. Put $n := |V(G_p)| = 4p + 4$. Henceforth, we assume $p$ to be large; moreover, the proof uses the embedding of $G_p$ given in Figure~\ref{fig:few-long}.

\begin{figure} [!ht]
	\centering
	\includegraphics[height=65mm]{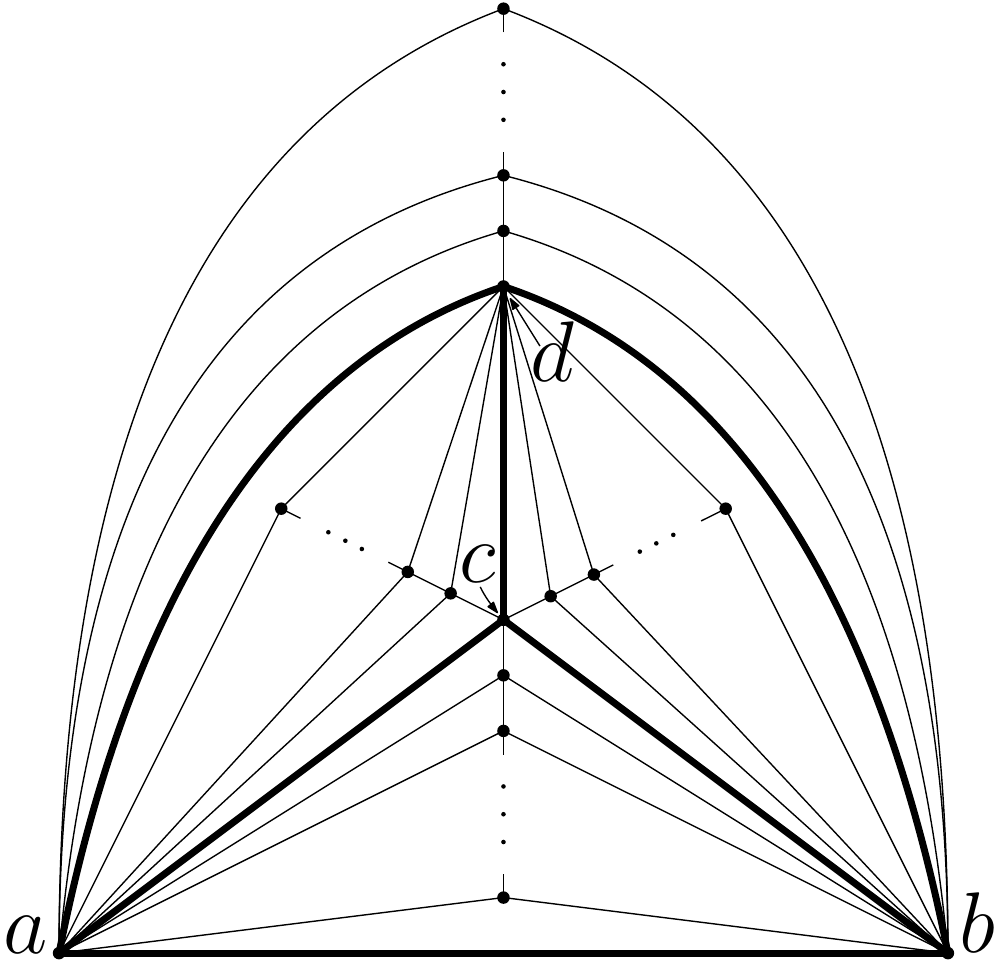}
	\caption{The plane graph $G_p$.
	}
	\label{fig:few-long}
\end{figure}

In $G_p$, consider a $k$-cycle $C$ with $k > 3p + 4$. For any fixed original triangle $\Delta$, the cycle $C$ cannot exit ${\rm int}\Delta$ towards an original vertex and then re-enter ${\rm int}\Delta$. Let $\Delta$ be an original triangle with vertices $x, y, z$ such that $z$ is cubic in ${\rm Int}\Delta$. Consider the path $P := C \cap {\rm Int}\Delta$. Either $P$ is an $xy$-path, in which case we call this traversal of $\Delta$ \emph{flexible}, or $P$ is an $xz$- or a $yz$-path, in which case we call this traversal \emph{fixed}. For any non-negative integer $q \le p$, there are $O(q)$ flexible traversals avoiding exactly $q + 1$ vertices in ${\rm Int}\Delta$, and a constant number of fixed traversals avoiding exactly $q + 1$ vertices in ${\rm Int}\Delta$. As $k > 3p + 4$, the cycle $C$ must visit the vertex $c$ (as defined in Figure~\ref{fig:few-long}) and $ac, bc, cd \notin E(C)$, so at most two traversals of the interiors of original triangles are flexible.

Let $\Delta_1, \Delta_2, \Delta_3, \Delta_4$ be the four original triangles of $G_p$. Put $q := n - k$ and let $q_i$ denote the number of vertices $C$ avoids in ${\rm int}\Delta_i$ so that $q = q_1 + q_2 + q_3 + q_4$. It is well-known that there are $\binom{q+3}{3}$ possible variations for $C$. We have shown above that a $k$-cycle $C$ with $k > 3p + 4$ admits at most two flexible traversals, so that there are in total $O(q^5)$ cycles of length $n - q$ in $G_p$ for all $q < \frac{n}{4} - 1$.

We now use the same strategy for the case when $t > 4$. Consider $v \in V(G)$. Insert into each of the $t - 1$ triangular faces surrounding $v$ a flipped double wheel on $p + 3$ vertices such that its apex is identified with $v$. Among the triangles sharing an edge with $G[N(v)]$ but not incident with $v$, choose one arbitrarily and insert thereinto a flipped double wheel as well. We call the resulting graph $G_p$. In $G_p$, consider a $k$-cycle $C$ with $k > (t-1)p + t$. It is not difficult to see that among the $t$ flipped double wheels present in $G_p$, at least two of $C$'s traversals of flipped double wheels are fixed (as defined in the third paragraph of this proof). This yields at most $t - 2$ flexible traversals. Put $q := n - k$. Arguing as above, there are $\binom{q+t-1}{t-1}$ possibilities for $C$ to avoid $q$ vertices. We obtain in total $O(q^{2t-3})$ cycles of length $n - q$ for all $q < \frac{n}{t} - 1$.
\end{proof}

One can infer immediately from the above result the following corollary.

\begin{corollary}
	For any non-negative integers $c,t$ there exists a constant $c' > 0$ and an infinite family ${\cal G}$ of triangulations of genus $g(t)$ such that any $G \in {\cal G}$ contains at least one and at most $c'$ $k$-cycles for any $k \in \{ \lceil n - c \rceil, \ldots, n \}$. Furthermore, there exists an infinite family ${\cal G}$ of triangulations of the sphere such that any $G \in {\cal G}$ contains at least one and at most $O(n)$ many $k$-cycles for any $k \in \{ \lceil n - \sqrt[5]{n} \rceil, \ldots, n \}$, and an infinite family ${\cal G}'$ of triangulations of the torus such that any $G \in {\cal G}'$ contains at least one and at most $O(n)$ many $k$-cycles for any $k \in \{ \lceil n - \sqrt[11]{n} \rceil, \ldots, n \}$.
\end{corollary}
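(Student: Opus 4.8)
The plan is to derive the corollary directly from the preceding theorem by choosing the function $f$ appropriately in each of the three claimed scenarios. For the first statement, fix the non-negative integers $c$ and $t$; we may assume $t \ge 4$ and $t = 0,3,4,7 \bmod 12$, since if $t$ does not satisfy these congruences we simply replace it by the smallest admissible integer $t' \ge \max\{t,4\}$ (enlarging the genus only increases the available family, and every $k$-cycle count we seek is still covered). Take $f : \mathbb{N} \to \mathbb{R}_{\ge 1}$ to be the constant function $f \equiv c$ if $c \ge 1$, or $f \equiv 1$ if $c = 0$; in either case $f(n) < \frac{n}{t} - 1$ for all sufficiently large $n$, so the hypothesis of the theorem is met. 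The theorem then produces an infinite family ${\cal G}$ of hamiltonian triangulations of genus $g(t)$ in which every $G \in {\cal G}$ contains $O(f(n)^{2t-3}) = O(c^{2t-3}) = O(1)$ many $k$-cycles for each $k \in \{\lceil n - c\rceil, \dots, n\}$; naming the absorbed constant $c'$ gives the upper bound. The lower bound ``at least one'' is immediate: these triangulations are hamiltonian, so they contain an $n$-cycle, and by the weak pancyclicity of (near) triangulations recorded via Lemma~\ref{lem:weaklypancyclic} — or simply because each such $G$ is built from a double-wheel-type skeleton that visibly admits cycles of every length in the stated range — there is at least one $k$-cycle for each $k$ in the interval.

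For the second statement I would instantiate the theorem with $t = 4$, which satisfies $t = 4 \bmod 12$, and with $f(n) := \lceil n^{1/5} \rceil$. Then $2t - 3 = 5$, so the theorem yields an infinite family of \emph{spherical} (genus $g(4) = \lceil \frac{0 \cdot 1}{12} \rceil = 0$) hamiltonian triangulations in which every member has $O(f(n)^{5}) = O((n^{1/5})^5) = O(n)$ many $k$-cycles for each $k \in \{\lceil n - \sqrt[5]{n}\rceil, \dots, n\}$, once $n$ is large enough that $\lceil n^{1/5}\rceil < \frac{n}{4} - 1$ — which holds for all but finitely many $n$, and discarding finitely many members leaves the family infinite. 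Again ``at least one'' follows from hamiltonicity together with the existence of cycles of all intermediate lengths in the range. For the third statement I would take $t = 7$, which satisfies $t = 7 \bmod 12$, so that $g(7) = \lceil \frac{4 \cdot 3}{12}\rceil = 1$, i.e.\ the torus, and $2t - 3 = 11$; choosing $f(n) := \lceil n^{1/11}\rceil$ gives $O(f(n)^{11}) = O(n)$ many $k$-cycles for each $k \in \{\lceil n - \sqrt[11]{n}\rceil, \dots, n\}$ in an infinite family of toroidal hamiltonian triangulations, by exactly the same reasoning.

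The only genuinely non-routine point is verifying the ``at least one $k$-cycle'' half of each assertion for \emph{every} $k$ in the stated interval, not merely $k = n$. For the spherical case this is handled cleanly by Hakimi and Schmeichel's theorem that every triangulation is weakly pancyclic, so a hamiltonian spherical triangulation has cycles of all lengths from $3$ up to $n$; in particular the whole interval $\{\lceil n - \sqrt[5]{n}\rceil, \dots, n\}$ is realised. For the higher-genus families weak pancyclicity of the plane triangulation theorem does not directly apply, so there I would instead argue structurally: the graphs $G_p$ in the theorem's construction are obtained by inserting flipped double wheels into the faces of $K_t$, and one can exhibit cycles of every length in the top interval explicitly by starting from the hamiltonian cycle and, in a single flipped double wheel, short-cutting past $j$ consecutive rim vertices for $j = 0, 1, \dots, p$, which changes the cycle length by exactly one at each step. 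This ``one-vertex-at-a-time'' shortening inside a double wheel is elementary and mirrors the telescoping argument already used in the proof of Lemma~\ref{lem:weaklypancyclic}, so I expect it to pose no real difficulty; it is simply the step that must be stated rather than waved away. Everything else is just bookkeeping of the exponents $2t-3$ and the genus values $g(t)$ for $t \in \{4, 7\}$.
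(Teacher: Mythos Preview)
Your proposal is correct and follows exactly the route the paper intends: the corollary is stated without proof, with the remark that it ``can be inferred immediately'' from the theorem, and your three instantiations---constant $f$ for part one, $t=4$ with $f(n)=n^{1/5}$ for the sphere, and $t=7$ with $f(n)=n^{1/11}$ for the torus---are precisely the intended substitutions (and your computations $g(4)=0$, $g(7)=1$, $2t-3 \in \{5,11\}$ are right). You are in fact more careful than the paper in justifying the ``at least one $k$-cycle'' clause, which the paper leaves implicit; your appeal to weak pancyclicity in the planar case and to explicit shortening inside a single flipped double wheel in the higher-genus case is sound.

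One small caveat: your workaround for non-admissible $t$ (``replace $t$ by the smallest admissible $t' \ge t$'') does not actually preserve the genus $g(t)$ claimed in the corollary, since $g$ is not constant on such intervals (e.g.\ $g(8)=2$ is not realised by any admissible $t$). This is really an imprecision in the corollary's phrasing rather than a defect in your argument; the intended reading is that $t$ ranges over the values admitted by the theorem.
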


We now discuss 4-connected triangulations and recall that by a classic theorem of Whitney~\cite{Wh31}, these form a subclass of the family of all hamiltonian triangulations. Alahmadi, Aldred, and Thomassen~\cite{AAT20} showed that every 5-connected triangulation has exponentially many hamiltonian cycles. Lo and Qian~\cite{LQ} extended this result by proving that every 4-connected triangulation with $O(n)$ many 4-cuts has exponentially many hamiltonian cycles. For an integer $n \ge 5$, the join $G$ of $C_{n-2}$ and $\overline{K}_2$---the so-called \emph{double wheel}, which is 4-connected---satisfies $c_3(G) \in \Theta(n)$ and $c_k(G) \in \Theta(n^2)$ for all $k \ge 4$; here, for a given graph $G$, we denote by $c_k(G)$ the number of $k$-cycles occurring in $G$. We emphasise that thus, perhaps contrary to intuition, in 4-connected triangulations no better solution to Problem~~\ref{pro:problem1} than a quadratic one can be achieved. Double wheels have $\Theta(n^2)$ 4-cuts, and no 4-connected triangulation with fewer hamiltonian cycles than a double wheel is known. Indeed, Hakimi, Schmeichel, and Thomassen~\cite{HST79} conjecture that every 4-connected $n$-vertex triangulation $G$ has at least $2(n-2)(n-4)$ hamiltonian cycles\footnote{Very recently, this conjecture has been solved \emph{asymptotically}, see~\cite{LWY22}.}, with equality if and only if $G$ is a double wheel. As already pointed out by Lo and Qian, it is intriguing to observe that few 4-cuts seem to imply many hamiltonian cycles, and vice-versa.

The next theorem's first part answers Problem~\ref*{pro:problem2} affirmatively, while its second part proves that, under certain circumstances, 4-connected triangulations contain a quadratic number of $k$-cycles for many values of $k$. We emphasise here the important open problem to determine whether there exists a $k$ such that every $n$-vertex triangulation contains $\Omega(n^2)$ $k$-cycles.

\begin{theorem} \label{thm:4connected}
	\begin{enumerate}[label=(\roman*)]
		\item Every $n$-vertex triangulation with at most one separating triangle contains $\Omega(n)$ $k$-cycles for every $k \in \{ 3, \ldots, n \}$.
		\item Let $G$ be a $4$-connected $n$-vertex triangulation. Suppose there is a set ${\cal S}$ of separating $4$-cycles in $G$ with
		$$\iota := \big\vert\{ (C, C') \in {\cal S \times \cal S}: C \cap C' \cong K_2 \}\big\vert.$$
		Then $G$ contains $\Omega(|\mathcal{S}| / (\iota + 1))$ $k$-cycles for every $k \in \{ \lceil \frac{n}{2} + \sqrt{\frac{n}{2} - 2} \rceil + 2, \ldots, n \}$.
	\end{enumerate}
\end{theorem}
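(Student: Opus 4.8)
The plan is to build a counting base $\mathfrak{C}_k$ for each target length $k$ and then invoke the Counting Base Lemma, mirroring the structure of the proof of Theorem~\ref{thm:linearlyshortcycles} but now aiming at \emph{long} cycles rather than short ones. For part~(i), let $G$ be an $n$-vertex triangulation with at most one separating triangle. For $k$ in the lower range the result is already covered by Theorem~\ref{thm:linearlyshortcycles} together with the Moon--Moser bound $3+\lceil(\tfrac{n-3}{2})^{\log_32}\rceil$; and the top of the spectrum ($k \in \{n-3,\dots,n\}$) is handled by the classical results of Tutte~\cite{Tu56}, Thomas--Yu~\cite{TY94}, and Sanders~\cite{Sa96} quoted in the introduction—but those only apply when $G$ is $4$-connected, so for the at-most-one-separating-triangle case I first want to pass to a $4$-connected ``core''. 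The cleanest route: a triangulation with no separating triangle is $4$-connected, and one with exactly one separating triangle $T$ is obtained from two $4$-connected (or near-$4$-connected) pieces glued along $T$; since cycles can be stitched across $T$ in either of two ways on each side, a linear count in the larger piece propagates to $G$. Then, for the remaining intermediate values of $k$, I would use the good-edge/zigzag-path machinery of Theorem~\ref{thm:linearlyshortcycles}: take $\mathcal P$ to be a linear-sized family of zigzag-paths around low-degree vertices, and for $P\in\mathcal P$ let $\mathcal C_P^k$ be the $k$-cycles through $P$. The non-emptiness of $\mathcal C_P^k$ is the crux: having deleted the internal edge's endpoints we sit inside a circuit graph, so by \cite[Lemma~2.3]{SY02} there is a long $w_1w_2$-path, extendable to a long cycle through $P$; then Lemma~\ref{lem:weaklypancyclic}(ii) lets us descend in steps of one down to any $k$ bounded below by $|N(v)|+1$, and since $v$ has degree at most $6$ along the good edge, all $k\ge 7$ are reachable. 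Combined with the already-handled ranges this covers every $k\in\{3,\dots,n\}$, and $\mathbf O_{\mathfrak C_k}=O(1)$ gives $\Omega(n)$ cycles.

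For part~(ii), the idea is different and more quantitative: here the ``moves'' are flips across the separating $4$-cycles in $\mathcal S$ rather than zigzag swaps. Given a separating $4$-cycle $C'=x_1x_2x_3x_4$ in $G$, a cycle $C$ that enters ${\rm Int}C'$ through two of the $x_i$ and has a fixed behaviour outside can be rerouted inside $C'$ in (at least) two combinatorially distinct ways of the same total length—this is exactly the $4$-cycle analogue of the zigzag exchange (cf.\ the remark in the introduction that a $4$-cycle is a symmetric difference of two facial triangles). So I would set $\mathcal P$ to be a suitable collection of ``frames'' $P$, one per $C'\in\mathcal S$ (e.g.\ two of the chords/edges of $C'$ recording how $C$ traverses the region), let $\sigma$ be the involution swapping the two routings inside $C'$, and let $\mathcal C_P^k$ be the $k$-cycles compatible with frame $P$. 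To verify $\mathcal C_P^k\neq\emptyset$ for every $k$ in the stated range, I would again appeal to the circuit-graph long-path lemma \cite[Lemma~2.3]{SY02}: cutting along $C'$ splits $G$ into two circuit graphs on roughly $n/2$ vertices each, and $(\tfrac{n/2-?}{2})^{\log_32}$ is too weak—so instead I expect the relevant lower bound on achievable lengths to be the \emph{Tutte-type} one, i.e.\ each side of a separating $4$-cycle in a $4$-connected triangulation has a Hamilton-type path between two prescribed boundary vertices, or at worst one missing a bounded number of vertices; balancing the two sides then yields cycles of every length from about $\tfrac{n}{2}+\sqrt{\tfrac{n}{2}-2}+2$ up to $n$. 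The overlap $\mathbf O_{\mathfrak C_k}$ is controlled by how many frames a single cycle can simultaneously satisfy, which is governed precisely by the $K_2$-intersection count $\iota$ (two separating $4$-cycles sharing an edge can constrain the same cycle), giving $\mathbf O_{\mathfrak C_k}\le \iota/|\mathcal S|\cdot(\text{something})+O(1)$, hence $\Omega(|\mathcal S|/(\iota+1))$ cycles after applying the Counting Base Lemma.

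The main obstacle I anticipate is verifying properties (i)--(iv) of a counting base for the flip-across-a-$4$-cycle operation in part~(ii)—in particular condition~(iv) (if $P_1\cap P_2=\emptyset$ then flipping at $C_1'$ keeps $C$ compatible with frame $P_2$) and the bookkeeping behind the overlap bound. Unlike zigzag-paths, where two good edges are either equal or share at most a vertex in a very controlled way, separating $4$-cycles can nest and cross, and a flip inside one can in principle alter how $C$ meets another. Restricting attention to $\mathcal S$ with the $\iota$-parameter is exactly the device that tames this: disjoint-framed pairs (those not counted in $\iota$) are genuinely independent, so (iv) holds for them, while the pairs counted by $\iota$ are precisely the ones inflating the overlap, which is why they appear in the denominator. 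Making this correspondence airtight—showing $\mathbf o_{\mathfrak C_k}(C,P)\le O(1+\iota/|\mathcal S|)$ on average, or bounding $\mathbf O_{\mathfrak C_k}$ directly—will be the technical heart of the argument. The length-range computation $k\ge\lceil\tfrac{n}{2}+\sqrt{\tfrac n2-2}\rceil+2$ I expect to drop out of optimising how unevenly the $q$ missed vertices can be split between the two sides of a separating $4$-cycle, analogous to the $\binom{q+3}{3}$-type count in the preceding theorem, but I would keep that calculation brief.
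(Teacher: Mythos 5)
Your overall architecture (counting bases built on zigzag-type exchanges, then the Counting Base Lemma) matches the paper, but part~(i) has a genuine gap in the middle of the spectrum. The circuit-graph lemma \cite[Lemma~2.3]{SY02} only produces a cycle through $P$ of length about $(\tfrac{n-3}{2})^{\log_32}$, and Lemma~\ref{lem:weaklypancyclic}(ii) only lets you descend \emph{from the length of a cycle you actually have} down to $|N(v)|+1$; so your ``intermediate range'' argument reaches at most $k\approx n^{0.63}$ and leaves everything between that and $n-O(1)$ uncovered. Your proposed patch at the top of the spectrum (splitting along the unique separating triangle and stitching) is also not worked out and would at best handle $k\in\{n-3,\dots,n\}$. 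The paper's proof avoids all of this with one ingredient you are missing: by \cite[Lemma~14(i)]{BZ19}, in a triangulation with at most one separating triangle \emph{every} zigzag-path whose internal vertices have degree at least $4$ extends to a \emph{hamiltonian} cycle $\mathfrak{h}$. That gives an anchor of length $n$ through $P$, and Lemma~\ref{lem:weaklypancyclic}(ii), applied inside ${\rm Int}\,\mathfrak{h}$ at an internal vertex of degree at most $6$, then yields a $k$-cycle through $P$ for every $k\in\{7,\dots,n\}$ in one stroke; the cases $k\le 6$ are handled as in Theorem~\ref{thm:linearlyshortcycles}. Without a hamiltonian (or near-hamiltonian) cycle through the prescribed path, your construction of $\mathfrak{C}_k$ does not close.

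For part~(ii) your skeleton is essentially the paper's: $\mathcal{P}$ consists of the two pairs of opposite edges of each $C'\in\mathcal{S}$, $\sigma$ swaps the two pairs, and $\mathcal{C}^k_P$ consists of $k$-cycles using $P$ whose remainder is a union of two ``crossing'' paths, one in ${\rm Int}\,C'$ and one in ${\rm Ext}\,C'$. But the two points you flag as obstacles are exactly where the content lies, and you do not supply them. First, non-emptiness: the paper takes the side of $C'=v_1v_2v_3v_4v_1$ with $j:=|{\rm ext}\,C'|\ge \tfrac n2-2$ exterior vertices and proves the isoperimetric-type inequality $(d_1-1)(d_2-1)\le j$ for the distances $d_1=d_{{\rm Ext}C'-v_2-v_4}(v_1,v_3)$ and $d_2=d_{{\rm Ext}C'-v_1-v_3}(v_2,v_4)$ (by exhibiting, in each distance layer from $v_1$, a path of length at least $d_2-2$), so one of the two opposite pairs is at distance at most $\sqrt{j}+1$; Lemma~\ref{lem:weaklypancyclic}(i) then interpolates between a shortest such path and the hamiltonian $v_1v_3$-path of ${\rm Ext}C'-v_2-v_4$ guaranteed by Thomas--Yu, while the interior contributes a full hamiltonian $v_2v_4$-path of ${\rm Int}C'-v_1-v_3$. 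This is where the threshold $\lceil\tfrac n2+\sqrt{\tfrac n2-2}\rceil+2$ comes from --- not from balancing how the missed vertices split between the two sides, as you guess. Second, the overlap bound is not the averaged quantity you write: it is simply $\mathbf{O}_{\mathfrak{C}_k}\le \iota+1$, because any $P'\neq P$ with $P\cap P'\neq\emptyset$ and $\mathcal{C}^k_{P'}\ni C$ comes from a $4$-cycle of $\mathcal{S}$ sharing an edge with the one defining $P$, and these pairs are exactly what $\iota$ counts; together with $|\mathcal{P}|=2|\mathcal{S}|$ the Counting Base Lemma gives $\Omega(|\mathcal{S}|/(\iota+1))$ directly.
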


\begin{proof}
(i) First, assume that $7 \le k \le n$. Let $G$ be a triangulation on $n$ vertices. For any edge $uv$ in $G$, there are exactly two vertices $w_1, w_2$ such that $\{u, v, w_1\}$ and $\{u, v, w_2\}$ induce triangular faces in $G$. We call $w_1 u v w_2$ and $w_1 v u w_2$ \emph{zigzag-paths} with \emph{internal vertices} $u, v$. We may identify a zigzag-path with its edge set, and define $\mathcal{P}$ to be the set of zigzag-paths that have all internal vertices of degree at least 4 and at least one internal vertex of degree at most 6. As $G$ has at most one cubic vertex and average degree less than 6, we have that $|\mathcal{P}| \in \Omega(n)$.
For any $P \in \mathcal{P}$, we define $\mathcal{C}^{k}_P$ to be the family of $k$-cycles $C$ with $E(C) \supset P$ and $\sigma(P) := w_1 v u w_2$ when $P = w_1 u v w_2$.

We claim that $\mathfrak{C}_k = (\mathcal{P}, \{\mathcal{C}^{k}_P\}_{P \in \mathcal{P}}, \sigma)$ is a counting base. It is left to show that every element of $\mathcal{P}$ can be extended to a cycle of length $k$ for any given $k \ge 7$.

Let $k$ be an integer satisfying $7 \le k \le n$. We now show that $\mathcal{C}^{k}_P$ is non-empty for every $P \in \mathcal{P}$. By~\cite[Lemma~14(i)]{BZ19} there is a hamiltonian cycle $\mathfrak{h}$ in $G$ that contains $P$. Denote by $v$ an internal vertex of $P$ of degree at most 6. As $P$ is a zigzag-path, without loss of generality, we may assume that there is precisely one face incident with $v$ lying in the exterior of $\mathfrak{h}$. Therefore, there are at most five faces incident with $v$ lying in the interior of $\mathfrak{h}$.
It follows from Lemma~\ref{lem:weaklypancyclic}(ii) that there is a $k$-cycle containing $P$ in ${\rm Int}\mathfrak{h}$ as $k \ge 7 \ge |N_G(v)| + 1$. Hence we conclude that $\mathcal{C}^{k}_P \neq \emptyset$.

As $|\mathcal{P}| \in \Omega(n)$ and $\mathbf{O}_{\mathfrak{C}_k} \le 5$, we resolve Problem~\ref{pro:problem2} for $k \ge 7$ by applying the Counting Base Lemma to the counting base $\mathfrak{C}_k$. As already mentioned in the proof of Theorem~\ref{thm:linearlyshortcycles}, for $k \in \{ 3, 4, 5 \}$ we get a linear lower bound on the number of $k$-cycles by Euler's formula and results of Hakimi and Schmeichel~\cite{HS79}, while the case $k = 6$ follows from Theorem~\label{thm:linearlyshortcycles}.

\smallskip

\noindent(ii) In this part, we define $\mathcal{P}$ to be the set of pairs of independent edges in 4-cycles from $\mathcal{S}$, i.e.\ for any $P \in \mathcal{P}$, there exists a 4-cycle $v_1 v_2 v_3 v_4 v_1 \in \mathcal{S}$ such that $P = \{v_1 v_2, v_3 v_4\}$. Furthermore, we define $\sigma(P) := \{v_1 v_4, v_2 v_3\}$ and $\mathcal{C}^k_P$ to be the family of $k$-cycles $C$ satisfying that $P \subset E(C)$ and $C - P$ is the union of a $v_1 v_3$-path and a $v_2 v_4$-path. To show that $\mathfrak{C}_k := (\mathcal{P}, \{\mathcal{C}^{k}_P\}_{P \in \mathcal{P}}, \sigma)$ is a counting base, it is left to verify that $\mathcal{C}^k_P$ is non-empty.

Let $P \in \mathcal{P}$ with $P = \{v_1 v_2, v_3 v_4\}$ and $C = v_1 v_2 v_3 v_4 v_1 \in {\cal S}$. Without loss of generality we assume that $|{\rm int} C| \le \frac{n}{2} - 2$. Put $j := |{\rm ext}C| \ge \frac{n}{2} - 2$, $d_1 := d_{{\rm Ext}C - v_2 - v_4}(v_1, v_3)$, and $d_2 := d_{{\rm Ext}C - v_1 - v_3}(v_2, v_4)$.
We claim that $(d_1 - 1) (d_2 - 1) \le j$.
Denote by $L_i$ the set of vertices of ${\rm Ext}C - v_2 - v_4$ at distance $i$ from $v_1$. It is not difficult to see that for any $i \in \{1, \dots, d_1 - 1\}$ there is a path $P^i$ in the graph induced by $L_i$ with one end-vertex in $N(v_2)$ and one other in $N(v_4)$ (here we include the case that $P^i$ consists of only one vertex which is in $N(v_2) \cap N(v_4)$). Note that $P^i$ has length at least $d_2 - 2$. Thus $j \ge \sum_{i = 1}^{d_1 - 1} |L_i| \ge (d_1 - 1) (d_2 - 1)$, from which we obtain that $d_1 \le \sqrt{j} + 1$ or $d_2 \le \sqrt{j} + 1$;
without loss of generality assume the former holds.

By the theorem of Thomas and Yu~\cite{TY94} stating that in a planar 4-connected graph the removal of any pair of vertices yields a hamiltonian graph, ${\rm Ext}C - v_2 - v_4$ contains a hamiltonian $v_1 v_3$-path.
By applying Lemma~\ref{lem:weaklypancyclic}(i) to a $(d_1 + 2)$-cycle containing $v_1 v_2 v_3$ and to a $(j + 3)$-cycle containing $v_1 v_2 v_3$ in ${\rm Ext}C - v_4$, respectively, we obtain that ${\rm Ext}C - v_2 - v_4$ contains a $v_1 v_3$-path of length $k$ for every integer $k$ satisfying $\sqrt{j} + 1 \le k \le j + 1$. Once more invoking the aforementioned theorem of Thomas and Yu, we know that there exists a hamiltonian $v_2 v_4$-path in ${\rm Int}C - v_1 - v_3$. As a cycle can be obtained by adding edges $v_1 v_2, v_3 v_4$ to the union of a $v_1 v_3$-path in ${\rm Ext}C - v_2 - v_4$ and a $v_2 v_4$-path in ${\rm Int}C - v_1 - v_3$, for every integer $k$ satisfying $\frac{n}{2} + \sqrt{\frac{n}{2} - 2} + 2 \le k \le n$ we have that $\mathcal{C}^{k}_P \ne \emptyset$ for every $P \in \mathcal{P}$. We can conclude that for these $k$, the triple $\mathfrak{C}_k = (\mathcal{P}, \{\mathcal{C}^{k}_P\}_{P \in \mathcal{P}}, \sigma)$ is a counting base.

Note that $|\mathcal{P}| = 2 |\mathcal{S}|$ and $\mathbf{O}_{\mathfrak{C}_k} \le \iota + 1$. We complete the proof by applying the Counting Base Lemma to the counting base $\mathfrak{C}_k$.
\end{proof}

If there are two vertices having $t$ common neighbours, then there is a set ${\cal S}$ of separating $4$-cycles with $|\mathcal{S}| \in \Omega(t^2)$ and
$\iota := \vert\{ (C, C') \in {\cal S \times \cal S}: C \cap C' \cong K_2 \} \vert = 0$. This yields the following corollary of Theorem~\ref{thm:4connected}(ii).

\begin{corollary} \label{cor:n2}
	Every $4$-connected $n$-vertex triangulation with two vertices having $\Theta(n)$ common neighbours contains $\Omega(n^2)$ $k$-cycles for every $k \in \{ \lceil \frac{n}{2} + \sqrt{\frac{n}{2} - 2} \rceil + 2, \ldots, n \}$.
\end{corollary}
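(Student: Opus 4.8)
The plan is to derive Corollary~\ref{cor:n2} directly from Theorem~\ref{thm:4connected}(ii) by exhibiting, for a $4$-connected triangulation $G$ with two vertices $x, y$ sharing a common neighbourhood $W := N(x) \cap N(y)$ of size $\Theta(n)$, a large set ${\cal S}$ of separating $4$-cycles whose members pairwise intersect in at most a single vertex. First I would observe that for any two distinct $w, w' \in W$, the $4$-cycle $x w y w' x$ is a cycle in $G$; since $G$ is $4$-connected it has no separating triangle and no separating edge, and because $|V(G)| = n$ is large while each such $4$-cycle has only $x, y$ on one ``side'' in a suitable sense, all but a bounded number of these $4$-cycles are genuinely separating (both the interior and exterior are non-empty). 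Concretely, fix the rotation at $x$; the neighbours of $x$ appear in cyclic order, and $y$ together with the vertices of $W$ occupy consecutive-ish positions; any $w, w'$ that are not cyclically adjacent around $x$ give a $4$-cycle $xwyw'x$ that separates the vertices of $G[N(x)]$ strictly between $w$ and $w'$ from the rest, and for $|W| = \Theta(n)$ there are $\Omega(n^2)$ such unordered pairs.

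Next I would take ${\cal S}$ to be this family of $\Omega(n^2)$ separating $4$-cycles and bound the quantity $\iota = |\{ (C, C') \in {\cal S} \times {\cal S} : C \cap C' \cong K_2 \}|$. The key point is that every cycle in ${\cal S}$ has vertex set $\{x, y, w, w'\}$ with $w, w' \in W$, so its edge set is $\{xw, wy, yw', w'x\}$; two such $4$-cycles $C = xwyw'x$ and $C'' = xuyu'x$ share an edge precisely when $\{w, w'\} \cap \{u, u'\} \ne \emptyset$, but then they already share a vertex of $W$ in addition to $x$ and $y$, so $C \cap C''$ contains a triangle or more — in particular $C \cap C'' \not\cong K_2$. (Any two distinct such $4$-cycles automatically share the edge-free vertex pair $x, y$; I should double-check the definition of $C \cap C'$ as a subgraph: it has vertex set $V(C) \cap V(C')$ and edge set $E(C) \cap E(C')$, and two distinct cycles on vertex sets $\{x,y,w,w'\}$ and $\{x,y,u,u'\}$ with $\{w,w'\} \ne \{u,u'\}$ have $E(C)\cap E(C') = \emptyset$ unless they share a $W$-vertex, in which case the shared subgraph is a path of length two, not $K_2$.) Hence $C \cap C' \cong K_2$ never happens for pairs from ${\cal S}$, giving $\iota = 0$.

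With $|{\cal S}| \in \Omega(n^2)$ and $\iota = 0$, Theorem~\ref{thm:4connected}(ii) immediately yields $\Omega(|{\cal S}|/(\iota+1)) = \Omega(n^2)$ $k$-cycles for every $k \in \{ \lceil \tfrac{n}{2} + \sqrt{\tfrac{n}{2} - 2}\rceil + 2, \ldots, n\}$, which is exactly the claimed conclusion; the genus/planarity hypotheses of Theorem~\ref{thm:4connected}(ii) are inherited since $G$ is a (plane) triangulation. The main obstacle — really the only non-bookkeeping point — is making the separation claim precise: one must verify that a positive fraction (indeed all but $O(1)$) of the $\binom{|W|}{2}$ candidate $4$-cycles $xwyw'x$ actually have both sides non-trivial, i.e.\ are separating rather than facial. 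This is handled by noting that a $4$-cycle in a $4$-connected triangulation is either separating or bounds a region containing no vertex, and the latter would force $x$ and $y$ to have a very restricted cyclic interleaving of $W$ around them, which can occur for only boundedly many pairs; alternatively one can simply discard the few non-separating ones and still retain $\Omega(n^2)$ members in ${\cal S}$. I would also remark that the statement about $\iota$ here is exactly the one already asserted in the sentence preceding the corollary in the excerpt, so the proof is essentially a one-line deduction once that auxiliary observation is recorded.
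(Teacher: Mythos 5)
Your proposal is correct and follows essentially the same route as the paper, which justifies the corollary exactly by exhibiting the $\Omega(n^2)$ separating $4$-cycles $xwyw'x$ through two vertices $x,y$ with $\Theta(n)$ common neighbours and observing that any two of them meet in either two isolated vertices or a path of length two, never a $K_2$, so $\iota=0$ and Theorem~\ref{thm:4connected}(ii) applies. One small imprecision: the number of non-separating candidates $xwyw'x$ (those with a chord $ww'$ or $xy$ on one side) is $O(n)$ rather than $O(1)$, but your fallback of discarding them while retaining $\Omega(n^2)$ members of ${\cal S}$ handles this correctly.
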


For every integer $k \ge 4$, Theorem~\ref{thm:4connected}(i) is not true for triangulations with $k$ separating triangles as there exist infinite families of such triangulations with a constant number of hamiltonian cycles; for $k \in \{ 4, 5 \}$ see~\cite{BSV18}, and for $k \ge 6$, infinitely many non-hamiltonian examples are easy to describe using toughness arguments. We do not know whether Theorem~\ref{thm:4connected}(i) can be extended to all triangulations containing at most two or at most three separating triangles.

We conclude this section with an application of Theorem~\ref{thm:4connected}(i) to general (i.e.\ not necessarily 4-connected) triangulations.

\begin{corollary}
	Let $G$ be an $n$-vertex triangulation containing a triangle $\Delta$ such that ${\rm Int}\Delta$ is a triangulation containing at most one separating triangle and $\Theta(n)$ vertices. Then $G$ contains $\Omega(n)$ many $k$-cycles for all $k \in \{ 3, \ldots, |V({\rm Int}\Delta)| \}$.
\end{corollary}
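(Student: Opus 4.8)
The plan is to reduce this statement about a general triangulation $G$ to an application of Theorem~\ref{thm:4connected}(i) to the triangulation living inside $\Delta$. Write $T := {\rm Int}\Delta$, regarded as a triangulation (with outer face bounded by $\Delta$), and let $m := |V(T)| \in \Theta(n)$. By hypothesis $T$ has at most one separating triangle, so Theorem~\ref{thm:4connected}(i) applies to $T$ directly and yields $\Omega(m) = \Omega(n)$ many $k$-cycles in $T$ for every $k \in \{3, \ldots, m\}$. The only thing to check is that these cycles, living in $T = {\rm Int}\Delta \subseteq G$, are genuinely cycles of $G$; but $T$ is a subgraph of $G$ (the closed interior of a triangle in a plane triangulation induces a triangulation on its own), so every cycle of $T$ is a cycle of $G$ of the same length. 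Hence $G$ contains $\Omega(n)$ many $k$-cycles for each $k \in \{3, \ldots, |V({\rm Int}\Delta)|\}$, as claimed.

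More carefully, I would first record the structural fact that if $\Delta$ is a triangle (facial or separating) in a plane triangulation $G$, then the subgraph of $G$ induced by ${\rm Int}\Delta$ is itself a plane triangulation whose outer boundary is $\Delta$: every bounded face of this subgraph is a face of $G$ and hence a triangle, and 3-connectivity follows from that of $G$ together with the fact that $\Delta$ separates. This legitimises calling ${\rm Int}\Delta$ ``a triangulation'' and lets us invoke Theorem~\ref{thm:4connected}(i) for it. Note also the degenerate possibility that $\Delta$ is already a facial triangle of $G$, in which case ${\rm Int}\Delta = \Delta$, $m = 3$, and the statement is the trivial assertion that $G$ has $\Omega(n)$ triangles, which holds by Euler's formula anyway; so we may assume $m \ge 4$.

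I do not expect a genuine obstacle here, since the heavy lifting is entirely contained in Theorem~\ref{thm:4connected}(i); the corollary is essentially a monotonicity/inheritance observation. The only point requiring a line of care is the subgraph claim above — in particular that the induced subgraph on ${\rm Int}\Delta$ really is $3$-connected and not merely $2$-connected, so that it qualifies as a ``triangulation'' in the sense fixed in the introduction (recall $K_3$ is excluded). This is immediate once $m \ge 4$: a $2$-cut of the induced subgraph together with the at-most-one-separating-triangle hypothesis would contradict $3$-connectivity of $G$ or the planarity of the embedding. With that dispatched, the argument is complete.
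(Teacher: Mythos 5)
Your proposal is correct and is exactly the argument the paper intends: the corollary is stated as an immediate application of Theorem~\ref{thm:4connected}(i) to the triangulation ${\rm Int}\Delta$, which has $\Theta(n)$ vertices and at most one separating triangle, and whose cycles are cycles of $G$. Your additional remarks on why ${\rm Int}\Delta$ is itself a $3$-connected plane triangulation are a harmless (indeed welcome) elaboration of a fact the paper's hypothesis already builds in.
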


\section{Notes}

\noindent \textbf{1.} It remains an open question whether there exists a $k$ such that every $n$-vertex triangulation contains $\Omega(n^2)$ many $k$-cycles. We do know by Theorem~\ref{thm:4connected}(ii) and Corollary~\ref{cor:n2} that under certain conditions 4-connected $n$-vertex triangulations contain $\Omega(n^2)$ $k$-cycles for many values of $k$, and that \emph{every} such triangulation contains $\Omega(n^2)$ many $(n - 2)$-cycles by a theorem of Thomas and Yu~\cite{TY94}, and $\Omega(n^2)$ many $n$-cycles by a very recent breakthrough result of Liu, Wang, and Yu~\cite{LWY22}. Although not explicitly stated, the latter work also implies the presence of $\Omega(n^2)$ many $(n - 1)$-cycles in 4-connected $n$-vertex triangulations.

\smallskip

\noindent \textbf{2.} As mentioned in the introduction, Mohar and Shantanam~\cite{MS} showed that in an 4-connected $n$-vertex triangulation $G$, every edge in $G$ is contained in a $k$-cycle for every $k \in \{ 3, \ldots, n \}$. It would be interesting to investigate to which degree this can be strengthened in an enumerative sense. While there are always exactly two 3-cycles through any given edge, already for 4-cycles the situation is more diverse: any edge in any triangulation on at least five vertices lies on at least four 4-cycles, but there exist triangulations such as the double wheels in which many edges lie on a linear (in the graph's order) number of 4-cycles. Furthermore, there exists a length $k$, depending on $n$, for which we can guarantee that every edge in every 4-connected triangulation lies in an at least linear number of $k$-cycles: it follows from a result of Ozeki and Vr\'ana~\cite{OV14} that, given a planar 4-connected graph $G$, for any 2-element set $X \subset E(G) \cup E(\overline{G})$, the graph $G \cup X$ admits a hamiltonian cycle containing $X$. Thus, for any pairwise distinct $v,w_1,w_2 \in V(G)$, where $w_1$ and $w_2$ are adjacent, we obtain a hamiltonian cycle in $G - v$ passing through $w_1w_2$. Hence, for any $n$-vertex planar 4-connected graph $G$ and any edge $e$ therein, $G$ contains $\Omega(n)$ many $(n-1)$-cycles containing $e$. In fact, double wheels show that one cannot do better than a linear bound for $(n-1)$-cycles through a specific edge.

\smallskip

\noindent \textbf{3.} In analogy to Tutte's celebrated result that planar 4-connected graphs are hamiltonian~\cite{Tu56}, one can ask how the problems and conjectures in this paper relate to the larger family of planar 4-connected graphs. While this class is known to contain $\Omega(n)$ hamiltonian cycles~\cite{BV21}, in stark contrast to 4-connected triangulations, some of its members are not even pancyclic, as 4-cycles might not be present (consider the line-graph of the dodecahedron).

\smallskip

\bigskip

\noindent \textbf{Acknowledgements.} On-Hei Solomon Lo's work was partially supported by Natural Sciences and Engineering Research Council of Canada (NSERC). Carol T.~Zamfirescu's research was supported by a Postdoctoral Fellowship of the Research Foundation Flanders (FWO).

\end{document}